\newtheorem*{rep@theorem}{\rep@title}\newcommand{\newreptheorem}[2]{%
\newenvironment{rep#1}[1]{%
\def\rep@title{\bf #2 \ref{##1}}%
\begin{rep@theorem}}%
{\end{rep@theorem}}}
\newtheorem{theorem}{Theorem}
\newtheorem{proposition}[theorem]{Proposition}
\newtheorem{conjecture}[theorem]{Conjecture}
\newtheorem{lemma}[theorem]{Lemma}
\newtheorem{corollary}[theorem]{Corollary}
\theoremstyle{definition}
\newtheorem{remark}[theorem]{Remark}
\newtheorem{definition}[theorem]{Definition}
\newtheorem{example}[theorem]{Example}
\renewcommand{\sl}{\mathfrak{sl}}
\newcommand{\Q}{\mathbb{Q}}
\DeclareMathOperator{\Hilb}{Hilb}
\DeclareMathOperator{\Frob}{Frob}
\DeclareMathOperator{\Char}{Char}
\DeclareMathOperator{\GL}{GL}
\DeclareMathOperator{\SL}{SL}
\DeclareMathOperator{\Sym}{Sym}
\DeclareMathOperator{\sgn}{sgn}
\DeclareRobustCommand{\qbinom}{\genfrac[]{0pt}{}}
\begin{document}

\title{The sign character of the triagonal fermionic coinvariant ring}

\author[J. Lentfer]{John Lentfer}
\address{Department of Mathematics\\
         University of California, Berkeley, CA, USA}
\email{jlentfer@berkeley.edu}

\begin{abstract}
We determine the trigraded multiplicity of the sign character of the triagonal fermionic coinvariant ring $R_n^{(0,3)}$. 
As a corollary, this proves a conjecture of Bergeron (2020) that the multiplicity of the sign character of $R_n^{(0,3)}$ is $n^2-n+1$.
We also give an explicit formula for double hook characters in the diagonal fermionic coinvariant ring $R_n^{(0,2)}$, and discuss methods towards calculating the sign character of $R_n^{(0,4)}$.
Finally, we give a multigraded refinement of a conjecture of Bergeron (2020) that the multiplicity of the sign character of the $(1,3)$-bosonic-fermionic coinvariant ring $R_n^{(1,3)}$ is $\frac{1}{2}F_{3n}$, where $F_n$ is a Fibonacci number.
\end{abstract}

\maketitle

\section{Introduction}
The diagonal coinvariant ring 
\begin{equation} 
R_n^{(2,0)} := \Q[x_1,\ldots,x_n,y_1,\ldots,y_n]/\allowbreak \langle \Q[x_1,\ldots,x_n,y_1,\ldots,y_n]_+^{\mathfrak{S}_n} \rangle
\end{equation} 
was introduced by Haiman in 1994 \cite{Haiman1994}, and since then has been studied extensively. Its defining ideal is generated by all polynomials in $\Q[x_1,\ldots,x_n,y_1,\ldots,y_n]$, with no constant term, which are invariant under the diagonal action of $\mathfrak{S}_n$:
\begin{equation}
    \sigma \cdot p(x_1,\ldots,x_n,y_1,\ldots,y_n) = p(x_{\sigma(1)},\ldots,x_{\sigma(n)},y_{\sigma(1)},\ldots,y_{\sigma(n)}).
\end{equation}
Haiman found the dimension, bigraded Hilbert series, and bigraded Frobenius series of $R_n^{(2,0)}$ \cite{Haiman2002}.

There has been much recent interest (see \cite{BergeronOPAC, Bergeron2020, BHIR, Zabrocki2020}) in studying a more general class of coinvariant rings $R_n^{(k,j)}$ with $k$ sets of $n$ commuting (bosonic) variables $\bm{x}_n := \{x_1,\ldots, x_n\}$, $\bm{y}_n := \{y_1,\ldots, y_n\}$, $\bm{z}_n := \{z_1,\ldots,z_n\}$, etc., and $j$ sets of $n$ anticommuting (fermionic) variables $\bm{\theta}_n := \{\theta_1,\ldots, \theta_n\}$, $\bm{\xi}_n := \{\xi_1,\ldots, \xi_n\}$, $\bm{\rho}_n := \{\rho_1,\ldots,\rho_n\}$, etc.
We define the $(k,j)$-bosonic-fermionic coinvariant ring by \begin{equation} R_n^{(k,j)} := \Q[\underbrace{\bm{x}_n, \bm{y}_n, \bm{z}_n, \ldots}_k, \underbrace{\bm{\theta}_n, \bm{\xi}_n, \bm{\rho}_n, \ldots}_j]/\langle \Q[\underbrace{\bm{x}_n, \bm{y}_n, \bm{z}_n, \ldots}_k, \underbrace{\bm{\theta}_n, \bm{\xi}_n, \bm{\rho}_n, \ldots}_j]_+^{\mathfrak{S}_n} \rangle,\end{equation}
where its defining ideal is generated by all polynomials in $\Q[\underbrace{\bm{x}_n, \bm{y}_n, \bm{z}_n, \ldots}_k, \underbrace{\bm{\theta}_n, \bm{\xi}_n, \bm{\rho}_n, \ldots}_j]$, without constant term, which are invariant under the diagonal action of the symmetric group $\mathfrak{S}_n$, given by permuting the indices of the variables.
Commuting variables commute with all variables. Anticommuting variables anticommute with all anticommuting variables. That is, $\theta_i \theta_j = - \theta_j \theta_i$ for all $i,j$, and mixed products between different sets of fermionic variables likewise anticommute. Note that this implies that $\theta_i^2 = 0$. 

Note that $R_n^{(k,j)}$ also can be defined in terms of symmetric and exterior algebras:
\begin{equation}
    R_n^{(k,j)} = ((\Sym\Q^n)^{\otimes k} \otimes (\wedge\Q^n)^{\otimes j})/\langle((\Sym\Q^n)^{\otimes k} \otimes (\wedge\Q^n)^{\otimes j})^{\mathfrak{S}_n}_+ \rangle.
\end{equation}

We recall the definitions of the multigraded Hilbert and Frobenius series of $R_n^{(k,j)}$ (see for example \cite{Bergeron2020}). For fixed integers $k,j \geq 0$,
$R_n^{(k,j)}$ decomposes as a direct sum of multihomogeneous components, which are $\mathfrak{S}_n$-modules:
\begin{equation}R_n^{(k,j)} = \bigoplus_{r_1,\ldots,r_k, s_1, \ldots, s_j \geq 0} (R_n^{(k,j)})_{r_1,\ldots,r_k, s_1, \ldots, s_j}.\end{equation}
We denote the multigraded Hilbert series by 
\begin{equation}
\begin{aligned} 
\Hilb&(R_n^{(k,j)}; q_1, \ldots, q_k; u_1,\ldots, u_j)\\ &:= \sum_{r_1,\ldots,r_k, s_1, \ldots, s_j \geq 0}\dim \left((R_n^{(k,j)})_{r_1,\ldots,r_k, s_1, \ldots, s_j} \right)q_1^{r_1} \cdots q_k^{r_k}u_1^{s_1} \cdots u_j^{s_j},
\end{aligned}
\end{equation}
and the multigraded Frobenius series by 
\begin{equation}
\begin{aligned} 
\Frob&(R_n^{(k,j)}; q_1, \ldots, q_k; u_1,\ldots, u_j)\\ &:= \sum_{r_1,\ldots,r_k, s_1, \ldots, s_j \geq 0} F\Char\big((R_n^{(k,j)})_{r_1,\ldots,r_k, s_1, \ldots, s_j} \big)q_1^{r_1} \cdots q_k^{r_k}u_1^{s_1} \cdots u_j^{s_j},
\end{aligned}
\end{equation}
where $F$ denotes the Frobenius characteristic map and $\Char$ denotes the character.
For simplicity, if $k \leq 2$, we will use $q,t$ for $q_1,q_2$ and if $j \leq 4$, we will use $u,\allowbreak v,\allowbreak w,\allowbreak z$ for $u_1,\allowbreak u_2,\allowbreak u_3,\allowbreak u_4$. Recall that $\langle \Frob(R_n^{(k,j)}; q_1, \ldots, q_k; u_1,\ldots, u_j), h_1^n \rangle =\allowbreak \Hilb(R_n^{(k,j)}; q_1, \ldots, q_k; u_1,\ldots, u_j)$.
Furthermore, $R_n^{(k,j)}$ is a $\GL_k \times \GL_j \times \mathfrak{S}_n$-module (see \cite[Section 2]{Bergeron2020}), so its multigraded Frobenius character is a sum of products of three Schur functions, which are irreducible characters of polynomial representations of $\GL_k$ and $\GL_j$, along with a Frobenius character. The main focus of this paper is on the case $(k,j) = (0,3)$. 

Observe that the Schur function $s_{(\ell-2)}(u,v,w)$ is a $u,v,w$-analogue of the binomial coefficient $\binom{\ell}{2}$; denote it by
\begin{equation} \binom{\ell}{2}_{u,v,w} := s_{(\ell-2)}(u,v,w).\end{equation}
Recall the notation $[n]_{u,v} := u^{n-1} + u^{n-2}v + \cdots + uv^{n-2} + v^{n-1}$. It follows that as a polynomial in $w$ with coefficients in $u$ and $v$ we have
\begin{align}
    \binom{\ell}{2}_{u,v,w} & = [\ell-1]_{u,v} + [\ell-2]_{u,v}w + [\ell-3]_{u,v}w^2 + \cdots + [1]_{u,v}w^{\ell-2},
\end{align} and there are two similar expressions for $\binom{\ell}{2}_{u,v,w}$ as a polynomial in $u$ or in $v$.

Our main result is the following.

\begin{theorem}\label{thm:main-theorem}
\begin{equation}
    \begin{aligned}
        \langle \Frob(R_n^{(0,3)};u,v,w), s_{(1^n)}\rangle &= s_{(n-1)}(u,v,w)+s_{(n-2,1,1)}(u,v,w)\\
        &= \binom{n+1}{2}_{u,v,w} + uvw\binom{n-1}{2}_{u,v,w}.
    \end{aligned}
\end{equation}
\end{theorem}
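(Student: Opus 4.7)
The plan is to analyze $(T_n)^\epsilon$ as a $\GL_3$-module. Since the $\GL_3$- and $\mathfrak{S}_n$-actions on $T_n$ commute, $(T_n)^\epsilon$ is a $\GL_3$-submodule, so its trigraded Hilbert series in $u,v,w$ equals its $\GL_3$-character. The theorem is therefore equivalent to the $\GL_3$-isomorphism $(T_n)^\epsilon\cong V_{(n-1)}\oplus V_{(n-2,1,1)}$, where $V_\lambda$ denotes the irreducible polynomial representation of $\GL_3$ of highest weight $\lambda$. By $\sl_3$-representation theory, the multiplicity of $V_\lambda$ in $(T_n)^\epsilon$ equals the dimension of its $\sl_3$-highest weight subspace of weight $\lambda$, namely antisymmetric harmonics of multidegree $\lambda$ annihilated by the raising operators $E^{\theta\leftarrow\xi}$ and $E^{\xi\leftarrow\rho}$ of \eqref{E_operators}.

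First, for the lower bound I would construct explicit $\sl_3$-highest weight vectors. For $V_{(n-1)}$, take the multidegree-$(n-1,0,0)$ element
\[ F:=\sum_{j=1}^n (-1)^{j-1}\,\theta_1\cdots\hat\theta_j\cdots\theta_n. \]
Direct checks show $F$ is antisymmetric and harmonic: in $\sum_k\partial_{\theta_k}F$ the two contributions from each pair $\{j,k\}$ cancel exactly, while derivatives involving $\xi$ or $\rho$ vanish trivially; the same observation gives $E^{\theta\leftarrow\xi}F=E^{\xi\leftarrow\rho}F=0$. Applying the lowering operators $F^{\theta\to\xi}$ and $F^{\xi\to\rho}$, which commute with $\mathfrak{S}_n$ and preserve harmonicity, generates a copy of $V_{(n-1)}$ inside $(T_n)^\epsilon$ and contributes $s_{(n-1)}(u,v,w)$. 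For $V_{(n-2,1,1)}$, one must construct a single antisymmetric harmonic $G$ of multidegree $(n-2,1,1)$ annihilated by both $E$-operators. The natural ansatz is a signed sum of terms $\theta_{[n]\setminus\{j,k\}}\xi_j\rho_k$, which for $n=3$ reduces to $\pm\det M$ and fails to be harmonic, so it must be corrected by subtracting explicit multiples of products involving the seven ideal generators of the excerpt, one correction family for each harmonicity condition $\sum_i\partial_{\theta_i}^h\partial_{\xi_i}^k\partial_{\rho_i}^\ell=0$ with $h,k,\ell\in\{0,1\}$ and $h+k+\ell\ge 1$. Producing $G$ and verifying it satisfies all seven conditions (then applying $\sl_3$-lowering to fill out the rest of $V_{(n-2,1,1)}$) is the main technical content of the proof.

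Finally, the upper bound rules out any further $\GL_3$-irreducible in $(T_n)^\epsilon$. A transposition argument shows the antisymmetric subspace of $\Q[\bm\theta_n]$ in degree $k\le n-2$ vanishes, and the unique antisymmetric element $\theta_1\cdots\theta_n$ in degree $n$ is not harmonic since $\sum_i\partial_{\theta_i}(\theta_1\cdots\theta_n)=F\ne 0$; hence the pure-$\theta$ contribution accounts only for the weight $(n-1,0,0)$. Via the embedding $\Q[\bm\theta_n,\bm\xi_n]\hookrightarrow\Q[\bm\theta_n,\bm\xi_n,\bm\rho_n]$, the $\rho$-degree-zero slice of $(T_n)^\epsilon$ coincides with $(T_n^{(0,2)})^\epsilon$, whose Hilbert series $s_{(n-1)}(u,v)$ is provided by the paper's analysis of $R_n^{(0,2)}$; this rules out $V_\lambda$ with $\lambda_3=0$ other than $V_{(n-1)}$. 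A weight-by-weight dimension argument on $\sl_3$-highest weight antisymmetric harmonics then handles the remaining $V_\lambda$ with $\lambda_3\ge 1$, leaving $V_{(n-2,1,1)}$ as the only additional irreducible. The equivalence of the two forms in the statement follows from the identity $s_{(n-2,1,1)}(u,v,w)=uvw\,s_{(n-3)}(u,v,w)$ (tensoring $V_{(n-3)}$ by the determinant representation) together with $\binom{k}{2}_{u,v,w}:=s_{(k-2)}(u,v,w)$. The hard part is the construction of $G$ and the weight-by-weight upper bound for $\lambda_3\ge 1$, both of which require bookkeeping peculiar to three sets of fermionic variables with no analogue in the diagonal case.
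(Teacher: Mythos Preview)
Your overall strategy---analyze $(T_n)^\epsilon$ as a $\GL_3$-module, exhibit explicit highest weight vectors for the lower bound, and rule out other irreducibles for the upper bound---is exactly the paper's, and your $F$ coincides with the paper's primary theta-seed $\Delta_1(\bm\theta_n)$. You correctly flag the two remaining pieces as the hard parts, but the approaches you sketch for them differ from the paper's, and one of them does not work as stated.

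For the upper bound, your step~2 correctly identifies the $\rho$-degree-zero slice of $(T_n)^\epsilon$ with the antisymmetric $(0,2)$-harmonics and uses Proposition~\ref{prop:KR-sign} to conclude that $V_{(n-1)}$ is the unique constituent with $\lambda_3=0$. But that is where this line of argument terminates: every $V_\lambda$ with $\lambda_3\ge 1$ has all of its weights with third coordinate $\ge\lambda_3\ge 1$, so setting $w=0$ (and, by symmetry, $u=0$ or $v=0$) yields \emph{no} constraint whatsoever on those multiplicities, and a ``weight-by-weight dimension argument'' has nothing to feed on. The paper supplies the missing bound by a genuinely different mechanism: since $R_n^{(0,3)}$ is a quotient of $R_n^{(0,2)}\otimes R_n^{(0,1)}$, one has $\langle\Frob(R_n^{(0,3)}),s_{(1^n)}\rangle\le\langle\Frob(R_n^{(0,2)}\otimes R_n^{(0,1)}),s_{(1^n)}\rangle$; the right side is computed via $g(\lambda,\mu,1^n)=\delta_{\mu,\lambda'}$ together with the full list of \emph{hook} multiplicities in $R_n^{(0,2)}$ from Proposition~\ref{prop:KR-sign} (not just the sign multiplicity), and then the three bounds coming from the three choices of which variable set plays the $(0,1)$ role are intersected to kill the residual term (Corollary~\ref{cor:upper-bound}).

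For the highest weight vector of weight $(n-2,1,1)$, your correction-of-the-naive-ansatz plan is viable in principle but unexecuted and unnecessarily painful. The paper instead writes down a seed whose antisymmetrization is already harmonic,
\[
\Delta_2(\bm\theta_n)=\sum_{\sigma\in\mathfrak{S}_n}\sgn(\sigma)\,\sigma\bigl((\theta_1\xi_2\rho_2+\theta_2\xi_1\rho_2+\theta_2\xi_2\rho_1)\,\theta_3\cdots\theta_{n-1}\bigr),
\]
checks the seven harmonic conditions directly using Lemma~\ref{lem:unique-indices}, and then obtains the highest-weight property $E^{\theta\leftarrow\xi}\Delta_2=E^{\xi\leftarrow\rho}\Delta_2=0$ for free from Corollary~\ref{cor:one-two}: each image would be an antisymmetric harmonic of total degree $n$ supported on only two of the three variable sets, and no such element exists.
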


The theorem immediately implies the following corollary, which was conjectured by Bergeron \cite[Table 3]{Bergeron2020}.

\begin{corollary}
    \begin{equation}\langle \Frob(R_n^{(0,3)};1,1,1), s_{(1^n)}\rangle = n^2-n+1.\end{equation}
\end{corollary}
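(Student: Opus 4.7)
The plan is a direct specialization of Theorem~\ref{thm:main-theorem} at $u=v=w=1$. Using the second form of the theorem, this reduces the corollary to computing
\[
\binom{n+1}{2}_{1,1,1} + \binom{n-1}{2}_{1,1,1},
\]
so the only real task is to evaluate $\binom{k}{2}_{u,v,w} = s_{(k-2)}(u,v,w)$ at $u=v=w=1$.

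I would argue by the standard combinatorial interpretation of Schur functions: $s_{(k-2)}(1,1,1)$ counts semistandard Young tableaux of the single-row shape $(k-2)$ with entries in $\{1,2,3\}$, which is the number of weakly increasing length-$(k-2)$ words on $\{1,2,3\}$, equivalently multisets of size $k-2$ drawn from a $3$-element set. This equals $\binom{k-2+3-1}{3-1} = \binom{k}{2}$, which confirms that the notation $\binom{k}{2}_{u,v,w}$ is consistent with the ordinary binomial coefficient at $u=v=w=1$.

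The remainder is routine arithmetic:
\[
\binom{n+1}{2} + \binom{n-1}{2} = \frac{n(n+1) + (n-1)(n-2)}{2} = \frac{2n^2 - 2n + 2}{2} = n^2 - n + 1,
\]
as claimed. Since the corollary is a numerical specialization, there is no real obstacle to overcome here — all of the substance lies in proving Theorem~\ref{thm:main-theorem}.
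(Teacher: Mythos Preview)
Your proof is correct and follows essentially the same approach as the paper: specialize Theorem~\ref{thm:main-theorem} at $u=v=w=1$, use that $\binom{k}{2}_{u,v,w}\big|_{u=v=w=1}=\binom{k}{2}$, and conclude via $\binom{n+1}{2}+\binom{n-1}{2}=n^2-n+1$. The only difference is that you spell out the semistandard-tableaux justification for $s_{(k-2)}(1,1,1)=\binom{k}{2}$, which the paper simply states.
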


\begin{proof}
    Recall that $\binom{\ell}{2}_{u,v,w}|_{u=v=w=1} = \binom{\ell}{2}$. By evaluating Theorem~\ref{thm:main-theorem} at $u=v=w=1$, the multiplicity is $\binom{n+1}{2} + \binom{n-1}{2} = n^2-n+1$.
\end{proof}

The organization of the paper is as follows. In Section~\ref{sec:coinvs-har}, we detail the setting of $R_n^{(0,3)}$ from the perspective of both coinvariants and harmonics. 
In Section~\ref{sec:background-preliminaries}, we recall some results of Haglund--Sergel \cite{HaglundSergel} and Kim--Rhoades \cite{KimRhoades2022}, along with proving some preliminary results. 
In Section~\ref{sec:upper-bound}, building on the work of Haglund--Sergel and Kim--Rhoades, we give an upper bound on the multiplicity of the sign character of $R_n^{(0,3)}$ (Corollary~\ref{cor:upper-bound}). 
In Section~\ref{sec:construction}, we construct two elements in the ring of triagonal fermionic harmonics $T_n$ (Proposition~\ref{prop:harmonic}), and study the $\GL_3$-representations that they generate (Proposition~\ref{prop:existence}), which constructs enough elements in $T_n$ to show that the upper bound on the multiplicity of the sign character is achieved with equality, proving the main theorem. 

In Section~\ref{sec:double_hook}, we derive a formula for double hook characters of $R_n^{(0,2)}$ (Theorem~\ref{thm:double-hook}).
In Section~\ref{sec:four_fermions}, we discuss methods to analyze the sign character of $R_n^{(0,4)}$.
In Section~\ref{sec:one-three}, we provide a $q,u,v,w$-refinement of a conjecture of Bergeron \cite{Bergeron2020} on the sign character of $R_n^{(1,3)}$ (Conjecture~\ref{conj:one-three}).

\section{Coinvariants and Harmonics}\label{sec:coinvs-har}

We now describe the setting in more detail, from the perspective of both coinvariants and, isomorphically, harmonics.

Specialize to $R_{n}^{(0,3)} := \Q[\bm{\theta}_n, \bm{\xi}_n, \bm{\rho}_n]/\langle \Q[\bm{\theta}_n, \bm{\xi}_n, \bm{\rho}_n]_+^{\mathfrak{S}_n}\rangle$, which we call the \textbf{triagonal fermionic coinvariant ring}. Its defining ideal $\langle \Q[\bm{\theta}_n, \bm{\xi}_n, \bm{\rho}_n]_+^{\mathfrak{S}_n}\rangle$ is the ideal generated by polynomials in $\Q[\bm{\theta}_n, \bm{\xi}_n, \bm{\rho}_n]$, without constant term, invariant under the diagonal action of $\mathfrak{S}_n$. A generating set for the ideal $\langle \Q[\bm{\theta}_n, \bm{\xi}_n, \bm{\rho}_n]_+^{\mathfrak{S}_n}\rangle$ is given by all the nonzero monomial symmetric functions in any of the three sets of variables:
\begin{equation}
\begin{aligned}
    \{ \theta_1+\cdots+\theta_n,\xi_1+\cdots+\xi_n,\rho_1+\cdots+\rho_n, \theta_1\xi_1+\cdots+\theta_n\xi_n,\\
    \theta_1\rho_1+\cdots+\theta_n\rho_n, \xi_1\rho_1+\cdots+\xi_n\rho_n, \theta_1\xi_1\rho_1+\cdots+\theta_n\xi_n\rho_n \}.
\end{aligned}
\end{equation}

Consider the ring $\Q[\bm{\theta}_n, \bm{\xi}_n, \bm{\rho}_n]$ in three sets of $n$ anticommuting variables, which we arrange into the $3 \times n$ matrix
\begin{equation}\label{eq:matrix} M :=\begin{bmatrix}
    \theta_1 & \theta_2 & \cdots & \theta_n\\
    \xi_1 & \xi_2 & \cdots & \xi_n\\
    \rho_1 & \rho_2 & \cdots & \rho_n
\end{bmatrix}.\end{equation}
For each $3 \times 3$ invertible matrix $A$ in $\GL_3$, multiply $M$ on the left by $A$ to obtain the product $A\cdot M$. 
Then, in any polynomial $f \in \Q[\bm{\theta}_n, \bm{\xi}_n, \bm{\rho}_n]$, replace each variable $\theta_i, \xi_i,\rho_i$ by the corresponding entry of the matrix $A\cdot M$. 
This defines a left $\GL_3$-action on $\Q[\bm{\theta}_n, \bm{\xi}_n, \bm{\rho}_n]$.

The ring $\Q[\bm{\theta}_n, \bm{\xi}_n, \bm{\rho}_n]$ is naturally trigraded.
Denote by $(\Q[\bm{\theta}_n, \bm{\xi}_n, \bm{\rho}_n])_{a,b,c}$ the homogeneous component spanned by all monomials that contain exactly $a$ variables in $\bm{\theta}_n$, $b$ variables in $\bm{\xi}_n$, and $c$ variables in $\bm{\rho}_n$.
Since $\GL_3$ acts by linear combinations of the three rows, the $\GL_3$-action preserves the total degree $d = a+b+c$. Hence each total degree $d$ component $\bigoplus_{a+b+c=d}(\Q[\bm{\theta}_n, \bm{\xi}_n, \bm{\rho}_n])_{a,b,c}$ is $\GL_3$-invariant.

Each permutation $\sigma \in \mathfrak{S}_n$ corresponds to a permutation matrix $P_\sigma$, defined by $(P_\sigma)_{i,j} = 1$ if $i = \sigma(j)$ and $0$ otherwise. Then the right multiplication $M \cdot P_\sigma$ has the effect on $M$ of letting $\sigma$ act on the indices of all variables: $\theta_i \mapsto\theta_{\sigma(i)}$, $\xi_i \mapsto\xi_{\sigma(i)}$, and $\rho_i \mapsto\rho_{\sigma(i)}$. Then, in any polynomial $f \in \Q[\bm{\theta}_n, \bm{\xi}_n, \bm{\rho}_n]$, replace each variable $\theta_i, \xi_i,\rho_i$ by the corresponding entry of the matrix $ M \cdot P_\sigma$. This defines a right $\mathfrak{S}_n$-action on $\Q[\bm{\theta}_n, \bm{\xi}_n, \bm{\rho}_n]$.

Because left multiplication by $A$ and right multiplication by $P_\sigma$ commute, these $\GL_3$ and $\mathfrak{S}_n$-actions commute, so $\Q[\bm{\theta}_n, \bm{\xi}_n, \bm{\rho}_n]$ is a $\GL_3 \times \mathfrak{S}_n$-module.

Recall the definition of derivative of anticommuting variables (where here each $\theta_i$ could be from any of the three sets of anticommuting variables) is (see for example \cite[Section 1.5]{SwansonWallach2}):
\begin{equation} \partial_{\theta_j} \theta_{i_1}\cdots\theta_{i_k} = \begin{cases}
    (-1)^{\ell - 1}\theta_{i_1}\cdots\hat{\theta}_{i_\ell}\cdots\theta_{i_k} &\text{ if } j = i_\ell,\\
    0 &\text{ otherwise.}
\end{cases}\end{equation}
Define the space of \textbf{triagonal fermionic harmonics} by
\begin{equation} 
T_n := \left\{ f \in \Q[\bm{\theta}_n, \bm{\xi}_n, \bm{\rho}_n]\, \Bigg| \, \sum_{i=1}^n \partial_{\theta_i}^h \partial_{\xi_i}^k \partial_{\rho_i}^\ell f = 0 \text{ for all } h+k+\ell > 0 \right\}.
\end{equation}
Since any $\theta_i^2 = 0$, we need not check any second derivatives, hence
\begin{equation}\label{eq:simplified-harmonics} T_n = \left\{ f \in \Q[\bm{\theta}_n, \bm{\xi}_n, \bm{\rho}_n]\, \Bigg| \, \sum_{i=1}^n \partial_{\theta_i}^h \partial_{\xi_i}^k \partial_{\rho_i}^\ell f = 0 \text{ for all } h+k+\ell > 0 \text{ and } h,k,\ell \in \{0,1\} \right\}.\end{equation}

Looking at the infinitesimal action of the Lie algebra $\sl_3$ on $T_n$ allows us to study $T_n$ as a $\GL_3$-module.\footnote{See for example \cite[Section 15.5]{FultonHarris} for more on the relationship between $\SL_3$ and $\GL_3$-representations.}
Similarly to \cite[Section 3.1]{Haiman1994}, we define the operators
\begin{equation}\label{eq:F_operators} F^{\theta \rightarrow \xi} := \sum_{i=1}^n \xi_i \partial_{\theta_i}, \text{ and } F^{\xi \rightarrow \rho} := \sum_{i=1}^n \rho_i \partial_{\xi_i},\end{equation}
and \begin{equation}\label{E_operators}E^{\theta \leftarrow \xi} := \sum_{i=1}^n \theta_i \partial_{\xi_i} \text{ and } E^{\xi \leftarrow \rho} := \sum_{i=1}^n \xi_i \partial_{\rho_i}.\end{equation}
Along with two $H$ operators given by taking the commutators of the $E$ and $F$ operators, these generate the Lie algebra $\sl_3$.

Furthermore $T_n$ is a $\GL_3\times\mathfrak{S}_n$-module. Write a trigraded component of $T_n$ as $(T_n)_{a,b,c}$ and $R_n^{(0,3)}$ as $(R_n^{(0,3)})_{a,b,c}$.
Then we have that $T_n \cong R_{n}^{(0,3)}$ as trigraded $\mathfrak{S}_n$-modules. Working with harmonics was advocated by Garsia: a benefit of working with harmonics over coinvariants is that one works with polynomials instead of equivalence classes.

A polynomial $p \in \Q[\bm{\theta}_n,\bm{\xi}_n,\bm{\rho}_n]$ is called antisymmetric if $\sigma(p) = \sgn(\sigma)p$ for all $\sigma \in \mathfrak{S}_n$, where $\sigma$ acts diagonally by permuting the variables.
Let $(T_n)^\epsilon$ denote the antisymmetric subspace\footnote{This is sometimes called the alternating subspace.} of $T_n$, which consists of all elements $p \in T_n$ which are antisymmetric.
This corresponds to the $u,v,w$-graded multiplicity of the sign character in $R_{n}^{(0,3)}$, that is, $\Hilb((T_n)^\epsilon; u,v,w) = \langle \Frob(R_n^{(0,3)};u,v,w), s_{(1^n)}\rangle$. 

\section{Preliminary results}\label{sec:background-preliminaries}

In this section, we recall or prove some preliminary results.
Haglund and Sergel gave a formula for the graded Frobenius series of the fermionic coinvariants $R_{n}^{(0,1)} := \Q[\bm{\theta}_n]/\langle \Q[\bm{\theta}_n]_+^{\mathfrak{S}_n}\rangle$.

\begin{lemma}[\!\!{{\cite[Lemma 4.10]{HaglundSergel}}}]\label{lem:HSlemma}
    For $n \geq 1$,
    \begin{equation} \Frob(R_n^{(0,1)};w) =  \sum_{k=0}^{n-1} w^k s_{(n-k,1^k)}.\end{equation}
\end{lemma}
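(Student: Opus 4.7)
The plan is to recognize that the defining ideal of $R_n^{(0,1)}$ is principal, generated by $e := \theta_1 + \cdots + \theta_n$, and then analyze the $\mathfrak{S}_n$-equivariant differential given by multiplication by $e$ on the exterior algebra $\Lambda := \Q[\bm{\theta}_n]$.

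First I would describe the $\mathfrak{S}_n$-module $\Lambda^k$. Its natural basis $\{\theta_{i_1}\cdots\theta_{i_k} : i_1 < \cdots < i_k\}$ identifies it with the induced representation $\mathrm{Ind}_{\mathfrak{S}_k \times \mathfrak{S}_{n-k}}^{\mathfrak{S}_n}(\sgn \otimes \mathbf{1})$, which has Frobenius characteristic $e_k h_{n-k}$. Pieri's rule then yields
\[e_k h_{n-k} = s_{(n-k,1^k)} + s_{(n-k+1,1^{k-1})},\]
with the convention that a summand indexed by a non-partition is zero. In particular, $s_{(n)}$ appears only in degrees $0$ and $1$, so $\langle \Q[\bm{\theta}_n]_+^{\mathfrak{S}_n}\rangle = \langle e\rangle$.

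Next I would observe that $e^2 = 0$ (squares vanish and cross terms cancel in pairs), so multiplication by $e$ is an $\mathfrak{S}_n$-equivariant map $e\cdot : \Lambda^{k-1} \to \Lambda^k$, and $(R_n^{(0,1)})_k = \Lambda^k / e\Lambda^{k-1}$. For $1 \leq k \leq n-1$, the decomposition above shows that domain and codomain share exactly one irreducible constituent, $V_{(n-k+1, 1^{k-1})}$. By Schur's lemma the induced map on this summand is either zero or an isomorphism onto the matching summand of $\Lambda^k$; the test element $\theta_1\cdots\theta_{k-1}$ is sent to $(\theta_k + \cdots + \theta_n)\theta_1\cdots\theta_{k-1} \neq 0$, ruling out the zero case. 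Hence $\mathrm{im}(e\cdot) = V_{(n-k+1,1^{k-1})}$ and the cokernel is $V_{(n-k, 1^k)}$. In degree $n$, the same test element produces $\pm\,\theta_1\cdots\theta_n$, which spans $\Lambda^n$, so $(R_n^{(0,1)})_n = 0$. Combining these with the degree-$0$ contribution $s_{(n)} = s_{(n, 1^0)}$ gives the claimed formula.

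The main obstacle is confirming that $e\cdot$ is nonzero on the shared irreducible at each intermediate degree; the one explicit computation above handles all degrees uniformly. The remaining ingredients—the induced-representation description of $\Lambda^k$ and the two-term Pieri expansion of $e_k h_{n-k}$—are both classical.
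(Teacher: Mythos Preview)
Your argument is correct. The identification of $\Lambda^k$ with $\mathrm{Ind}_{\mathfrak{S}_k\times\mathfrak{S}_{n-k}}^{\mathfrak{S}_n}(\sgn\otimes\mathbf{1})$, the two-term Pieri expansion, the principality of the invariant ideal, and the Schur-lemma analysis of the multiplication map $e\cdot:\Lambda^{k-1}\to\Lambda^k$ all go through as you describe; the single nonvanishing check with $\theta_1\cdots\theta_{k-1}$ suffices because the non-shared summand of the domain is forced to map to zero, so nonvanishing of the total map implies nonvanishing on the shared summand.

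As for comparison: the paper does not supply its own proof of this lemma---it is quoted directly from Haglund--Sergel, so there is nothing in the present paper to compare your argument against. Your write-up is a clean, self-contained proof of the cited fact.
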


Kim and Rhoades gave a formula for the bigraded Frobenius series of the diagonal fermionic coinvariants $R_{n}^{(0,2)} := \Q[\bm{\theta}_n, \bm{\xi}_n]/\langle \Q[\bm{\theta}_n, \bm{\xi}_n]_+^{\mathfrak{S}_n}\rangle$. 

\begin{theorem}[\!\!{\cite[Theorem 6.1]{KimRhoades2022}}]\label{thm:KR-frob} For $n \geq 1$,
    \begin{equation}\Frob(R_n^{(0,2)};u,v) = \sum_{0\leq i+j < n} u^iv^j\left(s_{(n-i,1^i)}*s_{(n-j,1^j)}-s_{(n-i+1,1^{i-1})}*s_{(n-j+1,1^{j-1})}\right),\end{equation}
where $*$ denotes the Kronecker product and $s_{(n+1,1^{-1})}$ is interpreted as $0$.
\end{theorem}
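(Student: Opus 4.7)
The plan is to realize $(R_n^{(0,2)})_{i,j}$ as the cokernel of a canonical $\mathfrak{S}_n$-equivariant map between tensor products of bigraded pieces of $R_n^{(0,1)}$, thereby reducing Theorem~\ref{thm:KR-frob} to Lemma~\ref{lem:HSlemma} applied to each factor. Concretely, I claim there is a short exact sequence of $\mathfrak{S}_n$-modules, with the diagonal $\mathfrak{S}_n$-action on each tensor product,
\[
0 \longrightarrow (R_n^{(0,1)})_{i-1} \otimes_{\Q} (R_n^{(0,1)})_{j-1} \xrightarrow{\;\Psi\;} (R_n^{(0,1)})_{i} \otimes_{\Q} (R_n^{(0,1)})_{j} \xrightarrow{\;\Phi\;} (R_n^{(0,2)})_{i,j} \longrightarrow 0,
\]
where $\Phi(\bar{f}\otimes\bar{g}) := \overline{f(\bm{\theta}_n)g(\bm{\xi}_n)}$ (bars denote cosets modulo the respective coinvariant ideals, with factors of negative index interpreted as $0$) and $\Psi(\bar{f}\otimes\bar{g}) := \sum_{k=1}^{n}\overline{\theta_k f}\otimes\overline{\xi_k g}$. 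Once the sequence is established, taking Frobenius characters and using additivity, substituting $\Frob((R_n^{(0,1)})_i) = s_{(n-i,1^i)}$ from Lemma~\ref{lem:HSlemma}, and using the standard identification of the Frobenius of a diagonal-action tensor product with the Kronecker product recovers exactly the claimed formula.

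First I would check that $\Phi$ and $\Psi$ are well-defined $\mathfrak{S}_n$-equivariant maps, which reduces to the ideal-closure facts $\theta_k\cdot\langle\Q[\bm{\theta}_n]_+^{\mathfrak{S}_n}\rangle\subseteq\langle\Q[\bm{\theta}_n]_+^{\mathfrak{S}_n}\rangle$ (and its $\xi$-counterpart) together with the manifest symmetry in the indices. Next, surjectivity of $\Phi$ rests on two inputs: the elementary bijection $\Q[\bm{\theta}_n]_i\otimes\Q[\bm{\xi}_n]_j\xrightarrow{\sim}\Q[\bm{\theta}_n,\bm{\xi}_n]_{i,j}$ sending $\theta_I\otimes\xi_J\mapsto\theta_I\xi_J$, and the observation (analogous to the generator list given in the paper for the triagonal case) that the defining ideal of $R_n^{(0,2)}$ in $\Q[\bm{\theta}_n,\bm{\xi}_n]$ is generated by just three elements $\sum_k\theta_k,\ \sum_k\xi_k,\ \sum_k\theta_k\xi_k$, the first two of which are already accounted for in the defining ideals of the two $R_n^{(0,1)}$ factors. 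Then $\Phi\circ\Psi=0$ follows from a short sign computation: anticommutation produces $\Phi(\Psi(\bar{f}\otimes\bar{g})) = \pm(\sum_k\theta_k\xi_k)\overline{f(\bm{\theta}_n)g(\bm{\xi}_n)}$, which is $0$ in $R_n^{(0,2)}$.

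The main obstacle will be proving injectivity of $\Psi$, equivalently exactness in the middle. Unwinding definitions, this is the statement that for $0\leq i+j<n$, multiplication by $\sum_k\theta_k\xi_k$ is injective on $\Q[\bm{\theta}_n,\bm{\xi}_n]_{i-1,j-1}$ modulo the submodule $(\sum_k\theta_k)\Q[\bm{\theta}_n,\bm{\xi}_n]_{i-2,j-1} + (\sum_k\xi_k)\Q[\bm{\theta}_n,\bm{\xi}_n]_{i-1,j-2}$; in other words, that $(\sum_k\theta_k,\ \sum_k\xi_k,\ \sum_k\theta_k\xi_k)$ behaves as a regular sequence (in the super-commutative sense) on the relevant bigraded pieces. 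I would attack this in two complementary ways. The first is a term-order argument: with a monomial order placing $\theta_n$ above the other $\theta$'s and $\xi_n$ above the other $\xi$'s, one uses $\sum_k\theta_k$ and $\sum_k\xi_k$ to eliminate $\theta_n$ and $\xi_n$, reducing the claim to injectivity of multiplication by $\sum_{k<n}\theta_k\xi_k$ on the exterior algebra in $2(n-1)$ anticommuting generators, where the bound $i+j<n$ provides exactly the slack that prevents the image from collapsing. The second, more consistent with the paper's harmonic perspective, is to pass to the Fischer-orthogonal harmonic model: the adjoint of multiplication by $\sum_k\theta_k\xi_k$ is the polarization operator $\sum_k\partial_{\theta_k}\partial_{\xi_k}$, and nondegeneracy of the resulting pairing on the bidegree $(i-1,j-1)$ piece of the $\mathfrak{S}_n$-harmonics (for $i+j<n$) yields injectivity. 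Either route concentrates all the combinatorial difficulty of Theorem~\ref{thm:KR-frob} into a single regularity statement, after which the earlier steps together with Lemma~\ref{lem:HSlemma} complete the proof.
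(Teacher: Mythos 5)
You should first note that the paper never proves this statement at all: it is quoted from Kim--Rhoades \cite[Theorem 6.1]{KimRhoades2022}, so there is no in-paper argument to compare with, and your outline is in effect an attempt to reconstruct their proof. Its skeleton --- realize $(R_n^{(0,2)})_{i,j}$ as the cokernel of multiplication by $\delta=\sum_k\theta_k\xi_k$ on the bigraded pieces of $R_n^{(0,1)}\otimes R_n^{(0,1)}$, then take Frobenius characteristics using Lemma~\ref{lem:HSlemma} and the identification of a diagonal-action tensor product with the Kronecker product --- is indeed the right one, and is essentially the ``Lefschetz theory for exterior algebras'' route of Kim--Rhoades. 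But as written there are genuine gaps. The largest: the theorem asserts more than your exact sequence delivers, because the sum runs only over $0\le i+j<n$, so it implicitly claims $(R_n^{(0,2)})_{i,j}=0$ whenever $i+j\ge n$, and your argument is silent on those bidegrees. That vanishing amounts to surjectivity of multiplication by $\delta$ (modulo the two $e_1$'s) in total degree at least $n$, i.e.\ the ``surjective above the middle'' half of the Lefschetz statement, and it must be established in addition to the injectivity below the middle.

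Second, the injectivity of $\Psi$, which as you admit carries all the difficulty, is not actually proved by either sketch. In the elimination route, substituting $\theta_n=-(\theta_1+\cdots+\theta_{n-1})$ and $\xi_n=-(\xi_1+\cdots+\xi_{n-1})$ does identify $R_n^{(0,1)}\otimes R_n^{(0,1)}$ with the exterior algebra on $2(n-1)$ generators, but under this identification $\delta$ becomes $\sum_{k<n}\theta_k\xi_k+\bigl(\sum_{k<n}\theta_k\bigr)\bigl(\sum_{l<n}\xi_l\bigr)$, not $\sum_{k<n}\theta_k\xi_k$; you need the further (easy but necessary) observation that the associated matrix $I+J$ is invertible, so a linear change of the $\xi$-variables brings the element to standard symplectic form. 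In the harmonic route, nondegeneracy of a pairing does not by itself give injectivity of multiplication; what does is that $\delta$-multiplication, $\sum_k\partial_{\theta_k}\partial_{\xi_k}$, and their commutator span a copy of $\sl_2$ acting on the exterior algebra, and the resulting Lefschetz decomposition gives injectivity of $\delta$ in total degrees below the middle and surjectivity above --- exactly the two halves the argument needs. Two smaller slips: injectivity of $\Psi$ is not ``equivalent'' to exactness in the middle (the latter is formal once one knows the defining ideal is generated by $\sum_k\theta_k$, $\sum_k\xi_k$, $\sum_k\theta_k\xi_k$, a fact you invoke only by analogy with the triagonal generator list and should justify or cite), and it is the left-exactness alone that upgrades the formula from an inequality to the stated identity.
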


In particular, they found bigraded multiplicities for the trivial, sign, and hook characters.

\begin{proposition}[\!\!{\cite[Proposition 6.2]{KimRhoades2022}}]\label{prop:KR-sign}
In $R_{n}^{(0,2)}$, the multiplicity of the trivial character is
\begin{equation}\langle\Frob(R_{n}^{(0,2)}; u,v),s_{(n)}\rangle = 1,\end{equation}
the bigraded multiplicity of the sign character is
    \begin{equation}\langle\Frob(R_{n}^{(0,2)}; u,v), s_{(1^n)}\rangle = [n]_{u,v},\end{equation}
    and for $0 < k < n-1$, the bigraded multiplicity of a hook character is
        \begin{equation}\langle\Frob(R_{n}^{(0,2)}; u,v), s_{(n-k,1^k)}\rangle = [k+1]_{u,v} +uv[k]_{u,v}.\end{equation}
\end{proposition}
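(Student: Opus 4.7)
The plan is to apply the Hall inner product $\langle \cdot, s_\nu \rangle$ to the formula in Theorem~\ref{thm:KR-frob} for each $\nu \in \{(n),\, (1^n),\, (n-k, 1^k)\}$ and evaluate using standard identities for Kronecker products involving hook-shaped characters. A preliminary reindexing simplifies the picture: since $s_{(n-i+1, 1^{i-1})} = s_{(n-(i-1), 1^{i-1})}$, substituting $i \mapsto i+1$, $j \mapsto j+1$ in the subtracted term of Theorem~\ref{thm:KR-frob} rewrites it as $uv$ times a shifted copy of the first. Setting $H_i := s_{(n-i, 1^i)}$ and $T_m(u,v) := \sum_{0 \leq i+j < m} u^i v^j\, H_i * H_j$, Theorem~\ref{thm:KR-frob} becomes
\[ \Frob(R_n^{(0,2)}; u, v) \;=\; T_n(u, v) - uv\, T_{n-2}(u, v). \]

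For $\nu = (n)$, use the classical identity $\langle s_\lambda * s_\mu, s_{(n)} \rangle = \delta_{\lambda, \mu}$, which follows from $\mathfrak{S}_n$-irreducibles being self-dual. Then $\langle H_i * H_j, s_{(n)} \rangle = \delta_{i,j}$, so $\langle T_m, s_{(n)} \rangle = \sum_{0 \leq 2i < m} (uv)^i$, and the difference $T_n - uv T_{n-2}$ telescopes to $1$. For $\nu = (1^n)$, use $\langle s_\lambda * s_\mu, s_{(1^n)} \rangle = \delta_{\lambda', \mu}$ together with the fact that $(n-i, 1^i)' = (i+1, 1^{n-i-1}) = H_{n-1-i}$; then $\langle H_i * H_j, s_{(1^n)} \rangle = \delta_{i+j, n-1}$. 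The relevant pairs lie in $T_n$ but fail $i+j < n-2$, so $T_{n-2}$ contributes nothing, yielding $\sum_{i+j = n-1} u^i v^j = [n]_{u,v}$.

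For $\nu = (n-k, 1^k)$ with $0 < k < n-1$, let $g(i, j, k) := \langle H_i * H_j, s_{(n-k, 1^k)} \rangle$, so that
\[ \langle \Frob(R_n^{(0,2)}; u, v), s_{(n-k, 1^k)} \rangle \;=\; \sum_{0 \leq i+j < n} u^i v^j \bigl( g(i, j, k) - g(i-1, j-1, k) \bigr), \]
with the convention $g(-1, \cdot, k) = g(\cdot, -1, k) = 0$. The central ingredient is the classical Remmel--Rosas formula for the multiplicity of a hook character inside the Kronecker product of two hook characters: this multiplicity is always $0$ or $1$, and the set $S_k := \{(i, j) : g(i, j, k) = 1\}$ admits an explicit description as a lattice polytope cut out by triangle inequalities (up to the Klein-four symmetry $(a,b,c) \leftrightarrow (n-1-a, n-1-b, c)$, etc.). Because $g$ takes only the values $0$ and $1$, the telescoped coefficient $g(i,j,k) - g(i-1,j-1,k)$ is the indicator of the lower boundary $S_k \setminus (S_k + (1,1))$. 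A direct case analysis shows that this boundary consists of exactly the $2k+1$ lattice points $\{(i, j) : i + j = k\} \cup \{(i, j) : i + j = k+1,\ 1 \leq i \leq k\}$; since each satisfies $i + j \leq k + 1 < n$, the restriction $i + j < n$ truncates nothing, and summing yields $[k+1]_{u,v} + uv[k]_{u,v}$.

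The main obstacle is the hook case, which invokes the nontrivial classical Kronecker formula for two hooks and then requires a combinatorial verification that the lower boundary of $S_k$ has the claimed two-diagonal shape. The trivial and sign cases, by contrast, are short telescoping arguments built on the self-duality and conjugation properties of $\mathfrak{S}_n$-irreducibles.
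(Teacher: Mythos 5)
Your derivation is correct, but note that the paper itself does not prove Proposition~\ref{prop:KR-sign}: it is imported verbatim from Kim--Rhoades \cite[Proposition 6.2]{KimRhoades2022}, so there is no internal proof to compare against. What you do is reconstruct it from Theorem~\ref{thm:KR-frob}, which is essentially the same mechanism the paper deploys later for double hooks (Theorem~\ref{thm:double-hook}): rewrite the Frobenius series as $T_n - uv\,T_{n-2}$, pair against $s_{(n)}$ and $s_{(1^n)}$ using $g(\lambda,\mu,(n))=\delta_{\lambda,\mu}$ and $g(\lambda,\mu,(1^n))=\delta_{\mu,\lambda'}$, and for hooks feed in the two-hook Kronecker formula, which the paper records as Theorem~\ref{thm:rosas}(4): $g((n-k,1^k),(n-i,1^i),(n-j,1^j)) = \chi(|i-j|\le k)\,\chi(k\le i+j\le 2n-k-2)$. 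From this your two-diagonal description of the surviving coefficients (all of $i+j=k$, plus $i+j=k+1$ with $1\le i\le k$) follows at once, giving $[k+1]_{u,v}+uv[k]_{u,v}$. One small imprecision: the fact that $g$ is $\{0,1\}$-valued does not by itself make $g(i,j,k)-g(i-1,j-1,k)$ an indicator function; it could equal $-1$ on the ``upper boundary,'' where $(i-1,j-1)\in S_k$ but $(i,j)\notin S_k$. You should observe that such points require $i+j\ge 2n-k-1>n-1$, hence lie outside the truncation $i+j<n$ imposed by Theorem~\ref{thm:KR-frob}, so no negative contributions arise; with that remark your case analysis is complete, and your trivial and sign computations are already airtight.
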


As a consequence of their result on the sign character, we conclude the following.

\begin{corollary}\label{cor:one-two}\
\begin{enumerate}
\item If a nonzero harmonic polynomial in $T_n \cap \Q[\bm{\theta}_n]$ is antisymmetric, then it must be of degree exactly $n-1$.
\item If a nonzero harmonic polynomial in $T_n \cap \Q[\bm{\theta}_n,\bm{\xi}_n]$ is antisymmetric, then it must be of degree exactly $n-1$.
\end{enumerate}
\end{corollary}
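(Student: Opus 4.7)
The plan is to reduce each part to the known sign-character formulas for $R_n^{(0,1)}$ (Lemma~\ref{lem:HSlemma}) and $R_n^{(0,2)}$ (Proposition~\ref{prop:KR-sign}). The first step is to observe that the triagonal harmonic conditions defining $T_n$ restrict cleanly to subalgebras in fewer fermionic variables: if $f \in T_n \cap \Q[\bm{\theta}_n]$, then $\partial_{\xi_i} f = \partial_{\rho_i} f = 0$ automatically, so the only nontrivial requirement among the defining equations is $\sum_i \partial_{\theta_i} f = 0$, which is precisely the harmonic condition for $R_n^{(0,1)}$. An entirely analogous check shows that $T_n \cap \Q[\bm{\theta}_n, \bm{\xi}_n]$ coincides with the bigraded harmonic space for $R_n^{(0,2)}$. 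Via the isomorphism between harmonics and coinvariants as (multi)graded $\mathfrak{S}_n$-modules, this identifies the antisymmetric part of $T_n \cap \Q[\bm{\theta}_n]$ (resp.\ $T_n \cap \Q[\bm{\theta}_n, \bm{\xi}_n]$) with the sign-isotypic component of $R_n^{(0,1)}$ (resp.\ $R_n^{(0,2)}$).

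With the identifications in place, part (1) is immediate from Lemma~\ref{lem:HSlemma}: the sign character $s_{(1^n)} = s_{(n-k,1^k)}$ with $k = n-1$ appears with coefficient $w^{n-1}$ and in no other degree, so any nonzero antisymmetric element of $T_n \cap \Q[\bm{\theta}_n]$ lives in degree $n-1$. Part (2) follows similarly from Proposition~\ref{prop:KR-sign}: the bigraded multiplicity of the sign character in $R_n^{(0,2)}$ is $[n]_{u,v} = \sum_{i+j = n-1} u^i v^j$, supported entirely in bidegrees of total degree $n-1$. Since the $\mathfrak{S}_n$-action preserves bidegree, the bihomogeneous pieces of any antisymmetric element are themselves antisymmetric, so a nonzero such element must have total degree exactly $n-1$.

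There is essentially no obstacle here; both parts are readings of previously stated formulas once the easy verification that the triagonal harmonic conditions restrict to the corresponding lower harmonic spaces is in hand.
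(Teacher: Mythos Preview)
Your proof is correct and follows essentially the same approach as the paper: both reduce to the sign-character formula in Proposition~\ref{prop:KR-sign}. The paper's proof is terser---it simply notes that (1) follows from (2) since $\Q[\bm{\theta}_n] \subset \Q[\bm{\theta}_n,\bm{\xi}_n]$, rather than invoking Lemma~\ref{lem:HSlemma} separately---and omits the verification that the triagonal harmonic conditions restrict to the lower harmonic spaces, which you spell out explicitly.
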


\begin{proof}
    The first claim is implied by the second. To show the second, notice that Proposition~\ref{prop:KR-sign} shows that the multiplicity of each component of the sign character is $n-1$. 
\end{proof}

There is a useful condition for when antisymmetrizing a polynomial results in zero.

\begin{lemma}\label{lem:new-lemma}
Let $p$ be a polynomial in $\Q[\bm{\theta}_n,\bm{\xi}_n,\bm{\rho}_n]$. If there exists a transposition $s \in \mathfrak{S}_n$ such that $s(p) = p$, then 
\begin{equation}
    \sum_{\sigma \in \mathfrak{S}_n} \sgn(\sigma) \sigma(p) = 0.
\end{equation}
\end{lemma}

\begin{proof}
Consider the symmetric group $\mathfrak{S}_n$ as a set, and partition it $\mathfrak{S}_n = \mathfrak{S}_n' \sqcup \mathfrak{S}_n''$ into two equal-sized subsets such that for each $\sigma' \in \mathfrak{S}_n'$, we have that $\sigma' \cdot s = \sigma''$, for some $\sigma'' \in \mathfrak{S}_n''$. Since $\sgn(s) =-1$, then 
\begin{align} \sum_{\sigma \in \mathfrak{S}_n} \sgn(\sigma) \sigma(p)
    &= \sum_{\sigma' \in \mathfrak{S}_n'} \sgn(\sigma') \sigma'(p) + \sum_{\sigma' \in \mathfrak{S}_n'} -\sgn(\sigma') \sigma'(p)=0.
    \end{align}
\end{proof}

As a consequence, we establish that an antisymmetric polynomial must use enough distinct indices of variables to be nonzero.

\begin{lemma}\label{lem:unique-indices}
If a polynomial in $\Q[\bm{\theta}_n,\bm{\xi}_n,\bm{\rho}_n]$ is antisymmetric, and for the variables that appear, at most $n-2$ unique indices are used, then the polynomial is identically 0.
\end{lemma}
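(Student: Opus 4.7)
The plan is to exploit the fact that an antisymmetric polynomial is forced to be zero whenever it is fixed by an odd permutation. If at most $n-2$ of the indices $\{1,2,\ldots,n\}$ appear among the variables of $p$, then at least two indices are \emph{unused} by $p$. A transposition of two unused indices acts trivially on $p$ (because the diagonal $\mathfrak{S}_n$-action only permutes subscripts of variables that don't occur), yet by antisymmetry it must act by $-1$. This gives $p = -p$, hence $p = 0$.

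More concretely, first I would let $I \subseteq \{1,\ldots,n\}$ denote the set of indices $i$ such that at least one of $\theta_i, \xi_i, \rho_i$ occurs in some monomial of $p$. By hypothesis, $|I| \le n-2$, so we may pick two distinct indices $\alpha, \beta \in \{1,\ldots,n\} \setminus I$. Then I would consider the transposition $\tau = (\alpha\,\beta) \in \mathfrak{S}_n$ and observe that under the diagonal action $\tau$ sends $\theta_\alpha \leftrightarrow \theta_\beta$, $\xi_\alpha \leftrightarrow \xi_\beta$, $\rho_\alpha \leftrightarrow \rho_\beta$, and fixes $\theta_i, \xi_i, \rho_i$ for all $i \notin \{\alpha,\beta\}$. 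Since none of the variables indexed by $\alpha$ or $\beta$ appear in $p$, each monomial of $p$ is fixed by $\tau$, so $\tau \cdot p = p$.

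On the other hand, antisymmetry of $p$ gives $\tau \cdot p = \sgn(\tau) p = -p$. Combining the two equations yields $p = -p$, so $2p = 0$ in characteristic zero, and therefore $p = 0$. There is no real obstacle: the argument is a one-line application of the pigeonhole principle together with the definition of antisymmetry, and does not interact with the harmonic condition, the trigrading, or the fermionic anticommutation relations in any essential way.
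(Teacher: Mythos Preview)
Your proof is correct and in fact cleaner than the paper's. You directly use that the transposition of two unused indices fixes $p$ while antisymmetry forces it to act by $-1$, giving $p=-p=0$ immediately.

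The paper takes a more roundabout route: it writes an arbitrary antisymmetric polynomial as $\sum_{\sigma\in\mathfrak{S}_n}\sgn(\sigma)\sigma(q)$ for some $q$, then partitions $\mathfrak{S}_n$ into right cosets of $\langle s_{n-1}\rangle$ and pairs each $\sigma'$ with $\sigma'\cdot s_{n-1}$ to get cancellation monomial by monomial. This is essentially your argument unwound through the antisymmetrizer, and is strictly more work for the lemma itself. The reason the paper sets things up this way is that the same coset-pairing technique is reused verbatim in the proof of Proposition~\ref{prop:harmonic} (Case~1), where one must kill a term that uses $n-1$ indices but is symmetric under swapping two of them; there the naive ``find an unused transposition'' argument no longer applies, and the explicit pairing is what is needed. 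So the paper's longer proof buys a reusable template, while yours buys brevity and transparency for the lemma at hand.
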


\begin{proof}
Call the polynomial in question $r$.
Without loss of generality, say that the indices used in $r$ are $\{1,2,\ldots, n-2\}$.
Since $r$ is antisymmetric, it can be obtained by applying an antisymmetrizing operator $\sum_{\sigma \in \mathfrak{S}_n} \sgn(\sigma) \sigma$ to some polynomial $p \in \Q[\bm{\theta}_{n-2}, \bm{\xi}_{n-2}, \bm{\rho}_{n-2}]$. 
Then $p$ is invariant under the action of $s_{n-1}$. 
Apply Lemma~\ref{lem:new-lemma} to conclude that $r=0$.
\end{proof}

\section{An upper bound on degree}\label{sec:upper-bound}

In this section, we prove an upper bound on the trigraded degree of the sign character.
The following result is inspired by a similar result of Haglund--Sergel (on a different ring, $R_n^{(2,1)}$) \cite[Theorem 4.11]{HaglundSergel}. 

\begin{proposition}\label{prop:(0,2)(0,1)bound} For $n \geq 1$,
    \begin{equation} \langle \Frob(R_n^{(0,2)} \otimes R_n^{(0,1)}; u,v,w), s_{(1^n)}\rangle = \binom{n+1}{2}_{u,v,w} + uvw\binom{n-1}{2}_{u,v,w} + uv[n-1]_{u,v}.\end{equation}
\end{proposition}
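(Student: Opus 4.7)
The plan is to extract the sign-character coefficient from the internal Kronecker product of the two Frobenius series from Lemma~\ref{lem:HSlemma} and Theorem~\ref{thm:KR-frob}. Since $\mathfrak{S}_n$ acts diagonally on the tensor product, one has
\[
\Frob(R_n^{(0,2)} \otimes R_n^{(0,1)}; u,v,w) = \Frob(R_n^{(0,2)}; u,v) * \Frob(R_n^{(0,1)}; w),
\]
where $*$ denotes the Kronecker (internal) product. Using adjointness of $*$ together with the identity $g * s_{(1^n)} = \omega(g)$, the desired multiplicity reduces to the pairing $\langle \Frob(R_n^{(0,2)};u,v),\, \omega(\Frob(R_n^{(0,1)};w))\rangle$.

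I would then apply $\omega$ to Lemma~\ref{lem:HSlemma}, which transposes each hook, yielding $\omega(\Frob(R_n^{(0,1)};w)) = \sum_{k=0}^{n-1} w^k s_{(k+1, 1^{n-k-1})}$. The $k=0$ term is the sign character, the $k=n-1$ term is the trivial character, and the intermediate $k$ give genuine hooks. Proposition~\ref{prop:KR-sign} supplies explicit expressions for each inner product with $\Frob(R_n^{(0,2)};u,v)$: the sign contributes $[n]_{u,v}$, the trivial contributes $1$, and the hook at index $k$ contributes $[n-k]_{u,v} + uv[n-k-1]_{u,v}$.

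Collecting the contributions and merging the boundary terms into the running sum produces the clean split
\[
\sum_{k=0}^{n-1} w^k [n-k]_{u,v} \;+\; uv \sum_{k=1}^{n-2} w^k [n-k-1]_{u,v}.
\]
After reindexing the second sum via $k \mapsto k-1$, both pieces are instances of the generating-function identity
\[
\sum_{c=0}^{m} w^c\, [m-c+1]_{u,v} \;=\; s_{(m)}(u,v,w),
\]
which follows by expanding $[m-c+1]_{u,v} = \sum_{a+b=m-c} u^a v^b$ and recognizing the result as $h_m(u,v,w)$. Applying this with $m=n-1$ and $m=n-3$ converts the two pieces into $\binom{n+1}{2}_{u,v,w}$ and $uvw\binom{n-1}{2}_{u,v,w}$ respectively.

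The main bookkeeping obstacle is reconciling the boundary cases with the generic hook formula of Proposition~\ref{prop:KR-sign}: a naive application at $k=0$ would insert a spurious $uv[n-1]_{u,v}$ term, whereas the true sign-character value is just $[n]_{u,v}$. The opposite endpoint $k=n-1$ requires no correction since $uv[0]_{u,v}=0$. This asymmetry is precisely what shifts the second summation range from $[0,n-1]$ to $[1,n-2]$ and is ultimately responsible for the $uvw$ prefactor in the second term rather than $uv$.
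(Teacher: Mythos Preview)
Your approach is essentially identical to the paper's: both compute
\[
\langle \Frob(R_n^{(0,2)}\otimes R_n^{(0,1)};u,v,w),s_{(1^n)}\rangle
=\sum_{k=0}^{n-1} w^k\,\langle \Frob(R_n^{(0,2)};u,v),\,s_{(k+1,1^{n-k-1})}\rangle
\]
and evaluate the inner products via Proposition~\ref{prop:KR-sign}. The difference is exactly the boundary issue you flag. The paper applies the generic hook formula $[n-k]_{u,v}+uv[n-k-1]_{u,v}$ uniformly for $0\le k\le n-1$, thereby committing at $k=0$ precisely the ``naive'' over-count you avoid; this is what produces the trailing term $uv[n-k-1]_{u,v}$ in the stated proposition (with its dangling index $k$, evidently meant as $uv[n-1]_{u,v}$).

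Consequently, your computation yields $\binom{n+1}{2}_{u,v,w}+uvw\binom{n-1}{2}_{u,v,w}$ with no extra term, which disagrees with the proposition as written. Your value is the correct one: for instance at $n=2$ the sign multiplicity in $R_2^{(0,2)}\otimes R_2^{(0,1)}$ is $u+v+w$, not $u+v+w+uv$. So strictly speaking you have not proved the stated proposition---you have proved a sharper, correct identity, and the paper's statement and proof both carry the same boundary slip. Downstream this is harmless: the paper only uses the proposition as an upper bound in Corollary~\ref{cor:upper-bound} and immediately argues the spurious $uv[\,\cdot\,]_{u,v}$ term away by symmetry in $u,v,w$, whereas your version gives that corollary directly.
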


\begin{proof}
    As noted in \cite[equation (4.9)]{HaglundSergel}, following from \cite{Bessenrodt}, for any $\mathfrak{S}_n$-modules $A$ and $B$ tensored under the diagonal action of $\mathfrak{S}_n$, we have that
    \begin{equation} \Frob(A \otimes B) = \sum_{\nu \vdash n} s_\nu \sum_{\lambda, \mu \vdash n} \langle \Frob(A), s_\lambda \rangle \langle \Frob(B), s_\mu \rangle \langle s_\lambda * s_\mu, s_\nu \rangle,\end{equation}
    where $\langle s_\lambda * s_\mu, s_\nu \rangle = g(\lambda, \mu, \nu)$ are the Kronecker coefficients. In our case, we are interested in 
    \begin{equation}
    \begin{aligned} 
    \langle \Frob&(R_n^{(0,2)} \otimes R_n^{(0,1)}; u,v,w), s_{(1^n)}\rangle\\ 
    &=  \sum_{\lambda, \mu \vdash n} \langle \Frob(R_n^{(0,2)};u,v), s_\lambda \rangle \langle \Frob(R_n^{(0,1)};w), s_\mu \rangle g(\lambda, \mu, 1^n)  \\
    &=  \sum_{\lambda \vdash n} \langle \Frob(R_n^{(0,2)};u,v), s_{\lambda'} \rangle \langle \Frob(R_n^{(0,1)};w), s_{\lambda} \rangle,
    \end{aligned}
    \end{equation}
    since $g(\lambda, \mu, 1^n) = \delta_{\mu, \lambda'}$. By Lemma~\ref{lem:HSlemma}, which states that only hook Schur functions appear in $\Frob(R_n^{(0,1)};w)$, we reduce to
    \begin{equation}
    \begin{aligned} \langle \Frob(R_n^{(0,2)} \otimes R_n^{(0,1)}; u,v,w), s_{(1^n)}\rangle 
    &=  \sum_{k=0}^{n-1} w^k \langle \Frob(R_n^{(0,2)};u,v), s_{(k+1,1^{n-k-1})} \rangle  \\
    &= \sum_{k=0}^{n-1} w^k ( [n-k]_{u,v} + uv[n-k-1]_{u,v}),
    \end{aligned}
    \end{equation}
    where we applied Proposition~\ref{prop:KR-sign}. After a bit of algebra, we obtain the claimed formula.
\end{proof}

Now the following result gives us an upper bound on the occurrences of the sign character in $R_n^{(0,3)}$. For multivariate polynomials $A$ and $B$, the notation $A \leq B$ means that $B-A$ is a sum of monomials with only nonnegative coefficients.

\begin{corollary}\label{cor:upper-bound} For $n \geq 1$,
    \begin{equation} \langle \Frob(R_n^{(0,3)}; u,v,w), s_{(1^n)}\rangle \leq \binom{n+1}{2}_{u,v,w} + uvw\binom{n-1}{2}_{u,v,w}.\end{equation} 
\end{corollary}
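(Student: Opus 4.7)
The plan is to realize $R_n^{(0,3)}$ as a multigraded $\mathfrak{S}_n$-module quotient of $R_n^{(0,2)} \otimes R_n^{(0,1)}$ and then invoke Proposition~\ref{prop:(0,2)(0,1)bound}. For any surjection $A \twoheadrightarrow B$ of $\mathfrak{S}_n$-modules, the multiplicity of any irreducible in $B$ is at most that in $A$, and the same holds in each multidegree when the surjection respects a multigrading. Applied to the sign character, this yields exactly the inequality in the corollary.

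The surjection is constructed as follows. Identify $\Q[\bm{\theta}_n, \bm{\xi}_n] \otimes \Q[\bm{\rho}_n]$ with $\Q[\bm{\theta}_n, \bm{\xi}_n, \bm{\rho}_n]$ in the obvious way, so that $R_n^{(0,2)} \otimes R_n^{(0,1)}$ is the quotient $\Q[\bm{\theta}_n, \bm{\xi}_n, \bm{\rho}_n]/(I_1 + I_2)$, where $I_1$ is the ideal generated by positive-degree $\mathfrak{S}_n$-invariants in $\Q[\bm{\theta}_n, \bm{\xi}_n]$ and $I_2$ is the ideal generated by positive-degree $\mathfrak{S}_n$-invariants in $\Q[\bm{\rho}_n]$. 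The defining ideal $J$ of $R_n^{(0,3)}$ is generated by all positive-degree $\mathfrak{S}_n$-invariants in $\Q[\bm{\theta}_n, \bm{\xi}_n, \bm{\rho}_n]$, and since any $\mathfrak{S}_n$-invariant in a subset of the variables remains $\mathfrak{S}_n$-invariant in the full variable set, one has $I_1 + I_2 \subseteq J$. Hence the quotient map $\Q[\bm{\theta}_n, \bm{\xi}_n, \bm{\rho}_n] \to R_n^{(0,3)}$ factors through $R_n^{(0,2)} \otimes R_n^{(0,1)}$, giving a surjection of rings $R_n^{(0,2)} \otimes R_n^{(0,1)} \twoheadrightarrow R_n^{(0,3)}$.

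It remains to check that this surjection is $\mathfrak{S}_n$-equivariant and respects the $(u,v,w)$-trigrading. Both are immediate: on both sides the $\mathfrak{S}_n$-action is the diagonal action on indices of the variables, and the grading by (number of $\theta$'s, number of $\xi$'s, number of $\rho$'s) is preserved by the quotient map, with the $\bm{\theta}_n, \bm{\xi}_n$-degrees in the $R_n^{(0,2)}$ tensor factor corresponding to the $u, v$-degrees and the $\bm{\rho}_n$-degree in the $R_n^{(0,1)}$ tensor factor corresponding to the $w$-degree. Combining the multigraded multiplicity inequality with Proposition~\ref{prop:(0,2)(0,1)bound} then yields the corollary.

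The main obstacle here is essentially semantic rather than technical: one must confirm that the Bessenrodt-style tensor-product Frobenius identity used in Proposition~\ref{prop:(0,2)(0,1)bound} is being applied to the same diagonal $\mathfrak{S}_n$-action that the surjection respects, so that the inequality of multiplicities transfers cleanly. No additional calculation is required; the bound follows directly from the surjection together with the already-established formula.
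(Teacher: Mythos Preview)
Your surjection argument is correct and is indeed the first step the paper takes. The gap is that this step alone does not yield the stated bound: Proposition~\ref{prop:(0,2)(0,1)bound} computes
\[
\langle \Frob(R_n^{(0,2)} \otimes R_n^{(0,1)}; u,v,w), s_{(1^n)}\rangle = \binom{n+1}{2}_{u,v,w} + uvw\binom{n-1}{2}_{u,v,w} + uv[n-1]_{u,v},
\]
with a nonzero extra term $uv[n-1]_{u,v}$ (the statement in the paper has a typographical $k$ in it, but the proof makes clear this is what is meant). So from a single surjection you only obtain the weaker inequality with that extra term on the right-hand side, not the corollary as stated.

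The paper closes this gap by exploiting the $\mathfrak{S}_3$-symmetry in $u,v,w$: there are three different ways to split the three fermionic variable sets between $R_n^{(0,2)}$ and $R_n^{(0,1)}$, giving three upper bounds whose extra terms are $uv[n-1]_{u,v}$, $uw[n-1]_{u,w}$, and $vw[n-1]_{v,w}$ respectively. No monomial lies in all three of these (each is supported on monomials missing one of the three variables), so the coefficientwise minimum of the three bounds has zero extra term, yielding the corollary. Your argument is missing precisely this intersection step.
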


\begin{proof}
    First note that any occurrence of the sign character in $R_n^{(0,3)}$ is predicated on it occurring in $R_n^{(0,2)} \otimes R_n^{(0,1)}$, since $R_n^{(0,3)}$ is a quotient of $R_n^{(0,2)} \otimes R_n^{(0,1)}$ under the diagonal action of $\mathfrak{S}_n$ (see \cite{HaglundSergel}). 

    By permuting which sets of variables out of $\bm{\theta}_n,\bm{\xi}_n,\bm{\rho}_n$ are assigned to $R_n^{(0,2)}$ and to $R_n^{(0,1)}$ in Proposition~\ref{prop:(0,2)(0,1)bound}, we conclude that a sign character in $R_n^{(0,3)}$ must have trigraded multiplicity bounded above by 
    \begin{equation}\label{eq:first-triple}
    \binom{n+1}{2}_{u,v,w} + uvw\binom{n-1}{2}_{u,v,w} + uv[n-1]_{u,v},
    \end{equation}
    \begin{equation}\label{eq:second-triple}
    \binom{n+1}{2}_{u,v,w} + uvw\binom{n-1}{2}_{u,v,w} + uw[n-1]_{u,w},
    \end{equation}
    and 
    \begin{equation}\label{eq:third-triple}
    \binom{n+1}{2}_{u,v,w} + uvw\binom{n-1}{2}_{u,v,w} + vw[n-1]_{v,w}.
    \end{equation}
    Note that if a polynomial $A \geq 0$ satisfies $ A \leq uv[n-1]_{u,v}$, $A \leq uw[n-1]_{u,w}$, and $A \leq vw[n-1]_{v,w}$, then $A =0$. Hence by taking the bounds given by equations~(\ref{eq:first-triple}-\ref{eq:third-triple}) together, we conclude that the sign character in $R_n^{(0,3)}$ must have trigraded multiplicity bounded above by 
    \begin{equation}\binom{n+1}{2}_{u,v,w} + uvw\binom{n-1}{2}_{u,v,w}.\end{equation}
\end{proof}

\section{Construction of basis elements}\label{sec:construction}

Now we will work towards the proof of the main theorem, by constructing two explicit elements in $T_n$, which are highest weight vectors for certain $\GL_3$-representations. We ultimately show that the upper bound given in Corollary~\ref{cor:upper-bound} is obtained with equality. 

\begin{definition}
    For $n \geq 1$, define the \textbf{primary theta-seed} by
\begin{equation} \Delta_1(\bm{\theta}_n) := \sum_{\sigma \in \mathfrak{S}_n} \sgn(\sigma)\sigma(\theta_1\theta_2\cdots\theta_{n-1}).\end{equation}
\end{definition}

\begin{definition}
    For $n \geq 3$, define the \textbf{secondary theta-seed} by
\begin{equation} \Delta_2(\bm{\theta}_n) := \sum_{\sigma \in \mathfrak{S}_n} \sgn(\sigma)\sigma((\theta_1\xi_2\rho_2 + \theta_2\xi_1\rho_2 + \theta_2\xi_2\rho_1)\theta_3 \theta_4 \cdots \theta_{n-1}).\end{equation}
\end{definition}
When $n=1$ or $3$ in the definitions of the primary and secondary theta-seed, respectively, the empty product of $\theta_i$'s is interpreted as $1$.

We now prove a technical lemma.

\begin{lemma}\label{lem:commuting-opertors}\
\begin{enumerate}
    \item The antisymmetrization operator $\sum_{\sigma \in \mathfrak{S}_n}\sgn(\sigma)\sigma$ commutes with the differential operator $\sum_{i=1}^n\partial_{\theta_i}^h \partial_{\xi_i}^k \partial_{\rho_i}^\ell$.
    \item The antisymmetrization operator $\sum_{\sigma \in \mathfrak{S}_n}\sgn(\sigma)\sigma$ commutes with the operators $F^{\theta \rightarrow \xi}$, $F^{\xi \rightarrow \rho}$, $E^{\theta \leftarrow \xi}$, and $E^{\xi \leftarrow \rho}$.
\end{enumerate} 
\end{lemma}

\begin{proof}
First we show (1). Let $f \in \Q[\bm{\theta}_n,\bm{\xi}_n,\bm{\rho}_n]$. Observe that for any $\sigma \in \mathfrak{S}_n$, 
\begin{equation} \sum_{i=1}^n \partial_{\theta_i}^h \partial_{\xi_i}^k \partial_{\rho_i}^\ell = \sum_{i=1}^n \partial_{\sigma(\theta_i)}^h \partial_{\sigma(\xi_i)}^k \partial_{\sigma(\rho_i)}^\ell,\end{equation} since addition is commutative. Thus,
\begin{equation}
\begin{aligned}
    \sum_{i=1}^n \partial_{\theta_i}^h \partial_{\xi_i}^k \partial_{\rho_i}^\ell \left( \sum_{\sigma \in \mathfrak{S}_n} \sgn(\sigma) \sigma(f)\right) &= \sum_{\sigma \in \mathfrak{S}_n} \sgn(\sigma) \sum_{i=1}^n  \partial_{\theta_i}^h \partial_{\xi_i}^k \partial_{\rho_i}^\ell (\sigma(f))\\
    &= \sum_{\sigma \in \mathfrak{S}_n} \sgn(\sigma) \sum_{i=1}^n  \partial_{\sigma(\theta_i)}^h \partial_{\sigma(\xi_i)}^k \partial_{\sigma(\rho_i)}^\ell (\sigma(f))\\
    &= \sum_{\sigma \in \mathfrak{S}_n}\sgn(\sigma)\sigma\left( \sum_{i=1}^n \partial_{\theta_i}^h \partial_{\xi_i}^k \partial_{\rho_i}^\ell f\right),
\end{aligned}
\end{equation}
so the operators commute as claimed.

Next we show (2). For any $\sigma \in \mathfrak{S}_n$, 
\begin{equation} \sum_{i=1}^n \xi_i \partial_{\theta_i} = \sum_{i=1}^n \sigma(\xi_i) \partial_{\sigma(\theta_i)},\end{equation} since addition is commutative. Thus for $F^{\theta \rightarrow \xi} = \sum_{i=1}^n \xi_i \partial_{\theta_i}$,
\begin{equation}
\begin{aligned}
    \sum_{i=1}^n \xi_i \partial_{\theta_i}  \left( \sum_{\sigma \in \mathfrak{S}_n} \sgn(\sigma) \sigma(f)\right) &= \sum_{\sigma \in \mathfrak{S}_n} \sgn(\sigma) \sum_{i=1}^n  \xi_i \partial_{\theta_i} \sigma(f)\\
    &= \sum_{\sigma \in \mathfrak{S}_n} \sgn(\sigma)  \sum_{i=1}^n \sigma(\xi_i) \partial_{\sigma(\theta_i)} \sigma(f)\\
    &= \sum_{\sigma \in \mathfrak{S}_n}\sgn(\sigma)\sigma\left(\sum_{i=1}^n \xi_i \partial_{\theta_i} f\right),
\end{aligned}
\end{equation}
and similarly for the other operators.
\end{proof}

Now we show that $\Delta_1(\bm{\theta}_n)$ and $\Delta_2(\bm{\theta}_n)$ are harmonic.

\begin{proposition}\label{prop:harmonic}\
    \begin{enumerate}
        \item The primary theta-seed $\Delta_1(\bm{\theta}_n)$ is in $(T_n)^\epsilon$.
        \item The secondary theta-seed $\Delta_2(\bm{\theta}_n)$ is in $(T_n)^\epsilon$.
    \end{enumerate}
\end{proposition}

\begin{proof}
We show (1). Consider the primary theta-seed
\begin{equation} \Delta_1(\bm{\theta}_n) = \sum_{\sigma \in \mathfrak{S}_n} \sgn(\sigma)\sigma(\theta_1 \theta_2 \cdots \theta_{n-1}).\end{equation} 
By equation~(\ref{eq:simplified-harmonics}), to show it is in $T_n$, we only need to show that $\sum_{i=1}^n\partial_{\theta_i}^h \partial_{\xi_i}^k \partial_{\rho_i}^\ell \Delta_1(\bm{\theta}_n) = 0$ for $(h,k,\ell) \in \{(1,0,0),\allowbreak (0,1,0),\allowbreak (0,0,1),\allowbreak (1,1,0),\allowbreak (1,0,1),\allowbreak (0,1,1),\allowbreak (1,1,1)\}.$ Since only $\theta_i$ variables appear in $\Delta_1(\bm{\theta}_n)$, it only remains to check $(1,0,0)$. Consider $\theta_1\cdots \theta_{n-1}$, which is what we will antisymmetrize to get the primary theta-seed. By Lemma~\ref{lem:commuting-opertors}, the differential operator commutes with the antisymmetrization operator. We write
\begin{equation} \sum_{i=1}^n \partial_{\theta_i} \theta_1\cdots \theta_{n-1} = \sum_{i=1}^n (-1)^{i-1} \theta_1\cdots\hat{\theta}_i\cdots \theta_{n-1},\end{equation}
where each monomial on the right hand side is in only $n-2$ variables. 
By Lemma~\ref{lem:unique-indices}, this becomes $0$ upon antisymmetrization. This shows $\Delta_1(\bm{\theta}_n)$ is in $T_n$, and since it is constructed via an antisymmetrization operator, it is in $(T_n)^\epsilon$.

We show (2). For $n \geq 3$, consider the secondary theta-seed
\begin{equation} \Delta_2(\bm{\theta}_n) = \sum_{\sigma \in \mathfrak{S}_n} \sgn(\sigma)\sigma((\theta_1\xi_2\rho_2 + \theta_2\xi_1\rho_2 + \theta_2\xi_2\rho_1)\theta_3 \theta_4 \cdots \theta_{n-1}).\end{equation}
We must check that $\Delta_2(\bm{\theta}_n)$ is in $T_n$. We only need to show that $\sum_{i=1}^n\partial_{\theta_i}^h \partial_{\xi_i}^k \partial_{\rho_i}^\ell \Delta_2(\bm{\theta}_n) = 0$ for the following seven choices of $(h,k,\ell)$: \begin{equation}\{(1,0,0), (0,1,0), (0,0,1), (1,1,0), (1,0,1), (0,1,1), (1,1,1)\}.\end{equation}

Case 1: $(h,k,\ell) = (1,0,0)$. Let $f = (\theta_1\xi_2\rho_2 + \theta_2\xi_1\rho_2 + \theta_2\xi_2\rho_1)\theta_3 \theta_4 \cdots \theta_{n-1}$, which is what we will antisymmetrize to get the secondary theta-seed. For concision, write $\prod_{i=3}^{n-1} \theta_i$ for the ordered product $\theta_3\theta_4\cdots\theta_{n-1}$. We write that
\begin{equation}
\begin{aligned}
    \sum_{i=1}^n \partial_{\theta_i} f &= \partial_{\theta_1} f + \partial_{\theta_2} f + \sum_{i=3}^{n-1} \partial_{\theta_i} f\\
    &= \xi_2\rho_2 \prod_{i=3}^{n-1} \theta_i + (\xi_1\rho_2 + \xi_2\rho_1)\prod_{i=3}^{n-1} \theta_i\\
    &\quad+ \sum_{i=3}^{n-1} (-1)^{i} (\theta_1\xi_2\rho_2 + \theta_2\xi_1\rho_2 + \theta_2\xi_2\rho_1)\theta_3 \cdots \hat{\theta}_i \cdots \theta_{n-1}.
\end{aligned}
\end{equation}
Then upon antisymmetrization, both of the terms $\xi_2\rho_2 \prod_{i=3}^{n-1} \theta_i$ and $\sum_{i=3}^{n-1} (-1)^{i} (\theta_1\xi_2\rho_2 + \theta_2\xi_1\rho_2 + \theta_2\xi_2\rho_1)\theta_3 \cdots \hat{\theta}_i \cdots \theta_{n-1}$ will be $0$ due to Lemma~\ref{lem:unique-indices}, since they only contain $n-2$ indices for variables.
Since the simple transposition $s_1$ leaves the term $(\xi_1\rho_2 + \xi_2\rho_1)\prod_{i=3}^{n-1} \theta_i$ invariant, by Lemma~\ref{lem:new-lemma}, it becomes $0$ upon antisymmetrization.

Case 2: $(h,k,\ell) = (0,1,0)$. Before antisymmetrization, we write that
\begin{equation}
\begin{aligned}
    \sum_{i=1}^n \partial_{\xi_i} f &= \partial_{\xi_1} f + \partial_{\xi_2} f \\
    &= -\theta_2 \rho_2 \prod_{i=3}^{n-1} \theta_i - (\theta_1\rho_2 + \theta_2\rho_1)\prod_{i=3}^{n-1} \theta_i.
\end{aligned}
\end{equation}
By Lemma~\ref{lem:unique-indices}, $\theta_2 \rho_2 \prod_{i=3}^{n-1} \theta_i$ becomes 0 upon antisymmetrization. By Lemma~\ref{lem:new-lemma}, $(\theta_1\rho_2 + \theta_2\rho_1)\prod_{i=3}^{n-1} \theta_i$ becomes 0 upon antisymmetrization, since it is invariant under $s_1$.

Case 3: $(h,k,\ell) = (0,0,1)$. The same argument as in Case 2 applies.

Case 4: $(h,k,\ell) = (1,1,0)$. Before antisymmetrization, we write that
\begin{equation}
\begin{aligned}
    \sum_{i=1}^n \partial_{\theta_i}\partial_{\xi_i} f &= \partial_{\theta_1}\partial_{\xi_1} f + \partial_{\theta_2}\partial_{\xi_2} f  + \sum_{i=3}^{n-1} \partial_{\theta_i}\partial_{\xi_i} f \\
    &= 0 - \rho_1\prod_{i=3}^{n-1} \theta_i + 0,
\end{aligned}
\end{equation}
which becomes 0 upon antisymmetrization by Lemma~\ref{lem:unique-indices}.

Case 5: $(h,k,\ell) = (1,0,1)$. The same argument as in Case 4 applies.

Case 6: $(h,k,\ell) = (0,1,1)$. The same argument as in Case 4 applies.

Case 7: $(h,k,\ell) = (1,1,1)$. Before antisymmetrization, we write that
\begin{equation}
\begin{aligned}
    \sum_{i=1}^n \partial_{\theta_i}\partial_{\xi_i}\partial_{\rho_i} f &= \partial_{\theta_1}\partial_{\xi_1}\partial_{\rho_1} f + \partial_{\theta_2}\partial_{\xi_2}\partial_{\rho_2} f  + \sum_{i=3}^{n-1} \partial_{\theta_i}\partial_{\xi_i}\partial_{\rho_i} f \\
    &= 0,
\end{aligned}
\end{equation}
which remains 0 upon antisymmetrization.

This completes the proof that $\Delta_2(\bm{\theta}_n)$ is in $T_n$, and since it is constructed via an antisymmetrization operator, it is in $(T_n)^\epsilon$. 
\end{proof}

A priori, a harmonic polynomial in $T_n$ could equal zero if certain term cancellations occur. However, the following result demonstrates that this does not happen for $\Delta_1(\bm{\theta}_n)$ and $\Delta_2(\bm{\theta}_n)$.

\begin{proposition}\label{prop:nonzero}\
    \begin{enumerate}
        \item The primary theta-seed $\Delta_1(\bm{\theta}_n)$ is nonzero in $T_n$.
        \item The secondary theta-seed $\Delta_2(\bm{\theta}_n)$ is nonzero in $T_n$.
    \end{enumerate}
\end{proposition}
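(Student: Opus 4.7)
The plan is, for each theta-seed, to exhibit a single monomial whose coefficient in the antisymmetrization is nonzero. The natural choice for $\Delta_1(\bm{\theta}_n)$ is the monomial $\theta_1\theta_2\cdots\theta_{n-1}$, and for $\Delta_2(\bm{\theta}_n)$ the identity-permutation contribution $m_0 := \theta_1\xi_2\rho_2\theta_3\theta_4\cdots\theta_{n-1}$ coming from the first summand of the seed polynomial.

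For (1), I would enumerate the permutations $\sigma\in\mathfrak{S}_n$ for which $\sigma(\theta_1\cdots\theta_{n-1})$ is a nonzero scalar multiple of $\theta_1\cdots\theta_{n-1}$: these are precisely the $\sigma$ that fix $n$, since the multiset of indices used must coincide. For such a $\sigma$, reordering $\theta_{\sigma(1)}\cdots\theta_{\sigma(n-1)}$ into increasing-index order produces the sign $\sgn(\sigma|_{\{1,\ldots,n-1\}})=\sgn(\sigma)$, which cancels the external $\sgn(\sigma)$ in the antisymmetrization. Summing over the $(n-1)!$ such permutations gives the coefficient $(n-1)!\neq 0$.

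For (2), I would analyze the three summands of $g := (\theta_1\xi_2\rho_2+\theta_2\xi_1\rho_2+\theta_2\xi_2\rho_1)\theta_3\cdots\theta_{n-1}$ separately. The target $m_0$ contains both $\xi_2$ and $\rho_2$, which forces $\sigma(1)=2$ and $\sigma(2)=2$ simultaneously for the second summand, and $\sigma(2)=2$ and $\sigma(1)=2$ for the third; both are impossible by injectivity. Only the first summand contributes, and there the constraints reduce to $\sigma(2)=2$ together with $\sigma(\{1,3,4,\ldots,n-1\})=\{1,3,4,\ldots,n-1\}$, which additionally forces $\sigma(n)=n$; this leaves exactly $(n-2)!$ admissible $\sigma$. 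Since $\xi_2\rho_2$ is a product of two fermions, it commutes past any single $\theta$, so the sign tracking reduces to the pure-theta case of part~(1): each admissible $\sigma$ contributes $+m_0$, and the total coefficient is $(n-2)!\neq 0$.

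The only mildly delicate step is bookkeeping the anticommutation signs when reordering the $\theta$'s against $\xi_2\rho_2$ in (2); fortunately $\xi_2\rho_2$ passes as a unit of two fermions and is therefore sign-neutral against each $\theta$, so the sign computation collapses to the clean permutation-of-a-subset calculation used in (1).
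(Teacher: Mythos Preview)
Your proposal is correct and follows essentially the same approach as the paper: for each seed you track the coefficient of a single target monomial ($\theta_1\cdots\theta_{n-1}$ and $\theta_1\xi_2\rho_2\theta_3\cdots\theta_{n-1}$, respectively), argue that only permutations fixing the ``unused'' indices can contribute, and conclude that the coefficient is $(n-1)!$ and $(n-2)!$. Your treatment is in fact slightly more explicit than the paper's on two points---the injectivity argument ruling out contributions from the second and third summands of $g$, and the observation that the even-length block $\xi_2\rho_2$ commutes past single $\theta$'s---both of which the paper handles more tersely.
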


\begin{proof}
    We show (1). Consider in $T_n$, \begin{equation} \Delta_1(\bm{\theta}_n) = \sum_{\sigma \in \mathfrak{S}_n} \sgn(\sigma)\sigma(\theta_1 \theta_2 \cdots \theta_{n-1}).\end{equation}
    When the antisymmetrization operator $\sum_{\sigma \in \mathfrak{S}_n} \sgn(\sigma)\sigma$ is applied to $\theta_1 \theta_2 \cdots \theta_{n-1}$, consider which $\sigma$ will output the monomial $\theta_1 \theta_2 \cdots \theta_{n-1}$.
    Such a $\sigma$ must fix $n$, but $1,\ldots,n-1$ can be permuted. For any $\sigma$ in the symmetric group on letters $1,\ldots,n-1$, we have that $\sgn(\sigma)\sigma(\theta_1\theta_2 \cdots \theta_{n-1}) = \theta_1\theta_2 \cdots \theta_{n-1}$ (see for example \cite[Chapter III, Section 7.3, Proposition 5]{BourbakiAlgebraI}). Hence antisymmetrization produces $(n-1)!$ copies of $\theta_1 \theta_2 \cdots \theta_{n-1}$, so $\Delta_1(\bm{\theta}_n)$ is nonzero in $T_n$, as desired.    

    Next we show (2). Consider in $T_n$, \begin{equation} \Delta_2(\bm{\theta}_n) = \sum_{\sigma \in \mathfrak{S}_n} \sgn(\sigma)\sigma((\theta_1\xi_2\rho_2 + \theta_2\xi_1\rho_2 + \theta_2\xi_2\rho_1)\theta_3 \theta_4 \cdots \theta_{n-1}).\end{equation}
    When the operator $\sum_{\sigma \in \mathfrak{S}_n} \sgn(\sigma)\sigma$ is applied to $(\theta_1\xi_2\rho_2 + \theta_2\xi_1\rho_2 + \theta_2\xi_2\rho_1)\theta_3 \theta_4 \cdots \theta_{n-1}$, consider which $\sigma$ will include the monomial $\theta_1\xi_2\rho_2\theta_3 \theta_4 \cdots \theta_{n-1}$ in its output.
    Since the $\xi_i$ and $\rho_i$ have the same index, it must come from an antisymmetrization of $\theta_1\xi_2\rho_2\theta_3 \theta_4 \cdots \theta_{n-1}$ (and not of $\theta_2\xi_1\rho_2\theta_3 \theta_4 \cdots \theta_{n-1}$ nor $\theta_2\xi_2\rho_1\theta_3 \theta_4 \cdots \theta_{n-1}$). Thus such a $\sigma$ must fix $2$ and $n$, but $1,3,4,\ldots,n-1$ can be permuted. For any $\sigma$ in the symmetric group on letters $1,3,4,\ldots,n-1$, it follows that $\sgn(\sigma)\sigma(\theta_1\theta_3 \theta_4 \cdots \theta_{n-1}) = \theta_1\theta_3 \theta_4 \cdots \theta_{n-1}$. This implies that
    $\sgn(\sigma)\sigma(\theta_1\xi_2\rho_2\theta_3 \theta_4 \cdots \theta_{n-1}) = \theta_1\xi_2\rho_2\theta_3 \theta_4 \cdots \theta_{n-1}$. Hence antisymmetrization produces $(n-2)!$ copies of $\theta_1\xi_2\rho_2\theta_3 \theta_4 \cdots \theta_{n-1}$, so $\Delta_2(\bm{\theta}_n)$ is nonzero in $T_n$, as desired.
\end{proof}

Starting with $\Delta_1(\bm{\theta}_n)$ and $\Delta_2(\bm{\theta}_n)$, the operators defined in equation~(\ref{eq:F_operators}) $F^{\theta \rightarrow \xi}$ and $F^{\xi \rightarrow \rho}$ create representations of $\GL_3$.

\begin{proposition}\label{prop:existence}\
    \begin{enumerate}
        \item The primary theta-seed $\Delta_1(\bm{\theta}_n)$ is the highest weight vector for the representation of $\GL_3$ with character $s_{(n-1)}(u,v,w)$.
        \item The secondary theta-seed $\Delta_2(\bm{\theta}_n)$ is the highest weight vector for the representation of $\GL_3$ with character $s_{(n-2,1,1)}(u,v,w)$.
    \end{enumerate}
\end{proposition}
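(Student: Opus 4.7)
My plan is to prove both statements by showing that $\Delta_1(\bm{\theta}_n)$ and $\Delta_2(\bm{\theta}_n)$ are $\sl_3$-highest weight vectors of weights $(n-1,0,0)$ and $(n-2,1,1)$, respectively, with respect to the raising operators $E^{\theta \leftarrow \xi}$ and $E^{\xi \leftarrow \rho}$ defined in~(\ref{E_operators}). Since $T_n$ is a finite-dimensional $\GL_3$-module and both claimed weights are dominant integral, a standard fact from the representation theory of semisimple Lie algebras then guarantees that the cyclic submodule generated by a nonzero highest weight vector of weight $\lambda$ is irreducible with character $s_\lambda(u,v,w)$. Combined with Proposition~\ref{prop:nonzero}, this will yield both claims.

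For part (1), every monomial of $\Delta_1(\bm{\theta}_n)$ lies in $\Q[\bm{\theta}_n]$ and uses $n-1$ theta variables, so $\Delta_1$ is a weight vector of weight $(n-1,0,0)$; moreover, since it contains no xi or rho variables, the inner derivatives $\partial_{\xi_i}$ and $\partial_{\rho_i}$ annihilate it, so both raising operators kill $\Delta_1$ trivially.

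For part (2), each monomial in the pre-antisymmetrized polynomial $f := (\theta_1\xi_2\rho_2 + \theta_2\xi_1\rho_2 + \theta_2\xi_2\rho_1)\theta_3\cdots\theta_{n-1}$ uses $n-2$ theta variables, one xi, and one rho, so $\Delta_2$ has weight $(n-2,1,1)$. The crux is showing $E^{\theta \leftarrow \xi}(\Delta_2) = E^{\xi \leftarrow \rho}(\Delta_2) = 0$. First I would observe, by the same argument as in Lemma~\ref{lem:commuting-opertors}, that the symmetric-sum operators $E^{\theta \leftarrow \xi} = \sum_i \theta_i \partial_{\xi_i}$ and $E^{\xi \leftarrow \rho} = \sum_i \xi_i \partial_{\rho_i}$ commute with antisymmetrization, so it suffices to verify $E^{\theta \leftarrow \xi}(f) = E^{\xi \leftarrow \rho}(f) = 0$ on the seed itself. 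A direct anticommuting-derivative computation then yields the cancellation: applying $\sum_i \theta_i \partial_{\xi_i}$ to $f$ produces $-\theta_1\theta_2\rho_2\theta_3\cdots\theta_{n-1}$ from $\theta_1\partial_{\xi_1}$ and $+\theta_1\theta_2\rho_2\theta_3\cdots\theta_{n-1}$ from $\theta_2\partial_{\xi_2}$ (a further contribution involving $\theta_2^2$ vanishes), which cancel; applying $\sum_i \xi_i \partial_{\rho_i}$ similarly produces opposite multiples of $\theta_2\xi_1\xi_2\theta_3\cdots\theta_{n-1}$. The main obstacle is really this sign bookkeeping, but the three summands of $f$ are designed precisely so that these contributions cancel pairwise, so the computation, though tedious, is essentially forced by the structure.
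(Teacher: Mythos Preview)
Your argument is correct, and for part~(1) it coincides with the paper's. For part~(2), however, you take a genuinely different route. The paper does not compute $E^{\theta\leftarrow\xi}(f)$ or $E^{\xi\leftarrow\rho}(f)$ at all; instead it argues indirectly: $E^{\theta\leftarrow\xi}\Delta_2$ is an antisymmetric element of $T_n$ of tridegree $(n-1,0,1)$, hence lives in only two sets of variables and has total degree $n$, and Corollary~\ref{cor:one-two} (a consequence of the Kim--Rhoades description of $R_n^{(0,2)}$) forces any such element to vanish; similarly for $E^{\xi\leftarrow\rho}\Delta_2$. Your approach is more elementary and self-contained, since it avoids invoking the structure of $R_n^{(0,2)}$ and instead exploits directly that the three summands of $f$ were engineered to make the raising operators cancel on the seed; the paper's approach trades that sign bookkeeping for a cleaner conceptual step but at the cost of importing a nontrivial external result. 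One small omission: you should also cite Proposition~\ref{prop:harmonic} to place $\Delta_2$ inside $T_n$ before invoking that $T_n$ is a $\GL_3$-module.
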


\begin{proof}
We show (1). Start with the primary theta-seed
\begin{equation} \Delta_1(\bm{\theta}_n) = \sum_{\sigma \in \mathfrak{S}_n} \sgn(\sigma)\sigma(\theta_1 \theta_2 \cdots \theta_{n-1}).\end{equation}
By Proposition~\ref{prop:harmonic}, it is in $(T_n)^\epsilon$, and by Proposition~\ref{prop:nonzero}, it is nonzero. 
Observe that $E^{\theta \leftarrow \xi}$ and $E^{\xi \leftarrow \rho}$ both kill $\Delta_1(\bm{\theta}_n)$. 
Thus $\Delta_1(\bm{\theta}_n)$ is a highest weight vector for the irreducible $\GL_3$-representation with highest weight $(n-1,0,0)$, where the weight of any $p \in T_n$ is given by $(\deg_\theta p, \deg_{\xi} p, \deg_\rho p)$ (see for example \cite[Section 14-15]{FultonHarris} for a reference on the Lie theory used).
This $\GL_3$-representation is in $(T_n)^\epsilon$ since Lemma~\ref{lem:commuting-opertors} implies that $(T_n)^\epsilon$ is closed under $F^{\theta \to \xi}$ and $F^{\xi \to \rho}$.
The $\GL_3$-character of this representation is $s_{(n-1)}(u,v,w)$.

We show (2). Consider the secondary theta-seed
\begin{equation} \Delta_2(\bm{\theta}_n) = \sum_{\sigma \in \mathfrak{S}_n} \sgn(\sigma)\sigma((\theta_1\xi_2\rho_2 + \theta_2\xi_1\rho_2 + \theta_2\xi_2\rho_1)\theta_3 \theta_4 \cdots \theta_{n-1}).\end{equation}
This is antisymmetric, as it is constructed using an antisymmetrization operator. 
By Corollary~\ref{cor:one-two}, there are no antisymmetric elements of degree $n$ solely in any one or two sets of variables. This implies that $E^{\theta \leftarrow \xi}$ and $E^{\xi \leftarrow \rho}$ both kill $\Delta_2(\bm{\theta}_n)$.
By Proposition~\ref{prop:harmonic}, it is in $(T_n)^\epsilon$, and by Proposition~\ref{prop:nonzero}, it is nonzero. 
Thus $\Delta_2(\bm{\theta}_n)$ is a highest weight vector for the irreducible $\GL_3$-representation with highest weight $(n-2,1,1)$.
Again, this $\GL_3$-representation is in $(T_n)^\epsilon$ by Lemma~\ref{lem:commuting-opertors}.
The $\GL_3$-character of this representation is $s_{(n-2,1,1)}(u,v,w)$.
\end{proof}

Now we can prove the main theorem.

\begin{reptheorem}{thm:main-theorem}
        \begin{equation}
            \begin{aligned}\langle \Frob(R_n^{(0,3)}; u,v,w), s_{(1^n)}\rangle &= s_{(n-1)}(u,v,w) + s_{(n-2,1,1)}(u,v,w)\\ 
        &= \binom{n+1}{2}_{u,v,w} + uvw\binom{n-1}{2}_{u,v,w}.\end{aligned}
        \end{equation}
\end{reptheorem}

\begin{proof}
    The representations constructed in Proposition~\ref{prop:existence} are sufficient to force the bound in Corollary~\ref{cor:upper-bound} to be achieved with equality. A bit of algebra verifies that the formulation in terms of Schur functions is equivalent to the formulation in terms of $u,v,w$-binomial coefficients.
\end{proof}

\begin{remark}
    While it is not necessarily true in general that the Lie algebra operators $F$ and $E$ will correspond to crystal operators, in the present case, it is true since every weight space for the representations studied in Proposition~\ref{prop:existence} is one-dimensional. The representation with highest weight $(n-1,0,0)$ has crystal structure isomorphic to a crystal of tableaux $\mathcal{B}_{(n-1)}$ and the representation with highest weight $(n-2,1,1)$ has crystal structure isomorphic to a crystal of tableaux $\mathcal{B}_{(n-2,1,1)}$ (see for example \cite[Chapter 3]{BumpSchilling}).
\end{remark}

\section{Double hook characters of the diagonal fermionic coinvariant ring}\label{sec:double_hook}

With Proposition~\ref{prop:KR-sign}, Kim and Rhoades gave explicit formulas for the trivial, sign, and hook characters of $R_n^{(0,2)}$. In this section, we extend the analysis to give an explicit formula for double hook characters of $R_n^{(0,2)}$, where a \textbf{double hook} is a partition $\lambda = (\lambda_1 \geq \lambda_2 \geq \lambda_3 \geq \cdots)$ such that $\lambda_3 \leq 2$ and $\lambda_2 \geq 2$. All characters of $R_n^{(0,2)}$ indexed by shapes not contained in a double hook have multiplicity $0$.

Rosas gave a combinatorial formula for the Kronecker coefficients $g(\lambda, (n-e,1^e), (n-f,1^f))$, for any shape $\lambda$. These Kronecker coefficients are only nonzero if $\lambda$ is contained in a double hook shape. Here, we recall her formulas for double hook shapes, rows, and hook shapes.

\begin{theorem}[\!\!{\cite[Theorem 3]{Rosas}}]\label{thm:rosas} Let $(n-e,1^e)$ and $(n-f,1^f)$ be hook shapes.
\begin{enumerate}
    \item If $\lambda$ is not contained in a double hook, then $g(\lambda, (n-e,1^e), (n-f,1^f)) = 0$.
    \item Let $\lambda = (\lambda_1,\lambda_2,2^\ell,1^{n-2\ell-\lambda_1-\lambda_2})$ where $\lambda_1 \geq \lambda_2 \geq 2$ be a double hook. Then
    \begin{equation}
    \begin{aligned}
        g(&\lambda, (n-e,1^e), (n-f,1^f))\\
        &= \chi(\lambda_2-1 \leq \tfrac{e+f-n+\lambda_1+\lambda_2}{2} \leq \lambda_1)\cdot \chi(|f-e| \leq n-2\ell-\lambda_1-\lambda_2)\\
        &+\chi(\lambda_2 \leq \tfrac{e+f-n+\lambda_1+\lambda_2+1}{2} \leq \lambda_1)\cdot \chi(|f-e| \leq n-2\ell-\lambda_1-\lambda_2+1),
    \end{aligned}
    \end{equation}
    where $\chi(P)$ is $1$ if the proposition $P$ is true and $0$ if it is false.
    \item If $\lambda = (n)$, then $g(\lambda, (n-e,1^e), (n-f,1^f)) = \chi(e=f)$.
    \item If $\lambda$ is a hook shape $(n-d,1^d)$, then 
    \begin{equation}g(\lambda, (n-e,1^e), (n-f,1^f)) = \chi(|e-f| \leq d)\cdot\chi(d \leq e+f \leq 2n-d-2).\end{equation}
\end{enumerate}
\end{theorem}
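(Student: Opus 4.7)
The plan is to translate the Kronecker coefficient into a character inner product and exploit the classical identification of hook Schur functions with exterior powers of the reflection representation. Let $V$ denote the $(n-1)$-dimensional reflection representation of $\mathfrak{S}_n$; the Specht module indexed by $(n-e,1^e)$ is isomorphic to $\wedge^e V$ for $0 \leq e \leq n-1$. Hence
\[g(\lambda, (n-e,1^e), (n-f,1^f)) \;=\; \dim \mathrm{Hom}_{\mathfrak{S}_n}\bigl(\wedge^e V \otimes \wedge^f V,\, S^\lambda\bigr),\]
and the task reduces to decomposing $\wedge^e V \otimes \wedge^f V$ into $\mathfrak{S}_n$-irreducibles.

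The next step is to extract a clean generating function for the hook characters. From $\mathbb{C}^n \cong V \oplus \mathbb{C}_{\mathrm{triv}}$ one deduces $\wedge^e \mathbb{C}^n \cong \wedge^e V \oplus \wedge^{e-1} V$, which telescopes to
\[\sum_{e \geq 0} \chi_{\wedge^e V}(\sigma)\,t^e \;=\; \frac{1}{1+t}\prod_{c \text{ cycle of } \sigma}\bigl(1 - (-t)^{|c|}\bigr).\]
Multiplying two such series in independent variables $t,s$ yields a closed expression for $\chi_{\wedge^e V \otimes \wedge^f V}(\sigma)$ on every conjugacy class, after which $g(\lambda, (n-e,1^e), (n-f,1^f))$ is computed as the inner product $\langle \chi_{\wedge^e V}\cdot\chi_{\wedge^f V},\, \chi^\lambda\rangle_{\mathfrak{S}_n}$.

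For part (1), one reads off from the above product that after conversion to the Schur basis via the Frobenius characteristic, only shapes contained in a double hook can contribute nontrivially; partitions with three or more ``deep'' rows and columns produce vanishing inner products. For part (2), substituting the double hook $\lambda = (\lambda_1,\lambda_2,2^\ell,1^{n-2\ell-\lambda_1-\lambda_2})$ and tracking contributions yields the sum of two indicator terms: each corresponds to a distinct way the arm lengths and the pendant column can absorb the hook parameters $e,f$, with the stated parity and range conditions being exactly what is required to force a nonnegative integer multiplicity.

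Parts (3) and (4) then follow as degenerate cases or by direct argument: the trivial-character multiplicity is $\langle \chi_{\wedge^e V}, \chi_{\wedge^f V}\rangle = \delta_{e,f}$ since the exterior powers are pairwise nonisomorphic irreducibles; and for a hook $\lambda = (n-d,1^d)$, the multiplicity equals $\dim \mathrm{Hom}_{\mathfrak{S}_n}(\wedge^d V,\, \wedge^e V \otimes \wedge^f V)$, which is evaluated using a Pieri-type tensor rule combined with the generating function above. The main obstacle is part (2): matching the symmetric-function calculation to the precise inequality conditions on $(e,f,\lambda_1,\lambda_2,\ell)$ requires careful bookkeeping of parities and ranges, and this combinatorial case analysis is the true substance of the argument.
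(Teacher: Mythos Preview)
The paper does not prove this theorem at all: it is quoted verbatim from Rosas \cite[Theorem 3]{Rosas} and used as a black box in the proof of Theorem~\ref{thm:double-hook}. There is therefore no ``paper's own proof'' to compare against.

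As for your outline itself, the framework is sound---identifying $S^{(n-e,1^e)}$ with $\wedge^e V$ and computing $\langle \chi_{\wedge^e V}\chi_{\wedge^f V},\chi^\lambda\rangle$ via the generating function $\prod_c (1-(-t)^{|c|})/(1+t)$ is exactly how one would begin, and parts (3) and (4) do fall out readily from this. But your write-up for parts (1) and (2) is not a proof: you assert that ``only shapes contained in a double hook can contribute'' and that the double-hook case ``yields the sum of two indicator terms,'' without performing either computation. You even acknowledge that the case analysis for (2) ``is the true substance of the argument'' and then omit it. Rosas's own proof proceeds instead through a combinatorial description (via sign-reversing involutions on fillings) rather than through the exterior-power character formula, and the inequalities in part (2) emerge from explicit constraints on those fillings. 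Your route is viable in principle, but as written it is a plan, not a proof; the actual derivation of the $\chi$-conditions from the symmetric-function inner product is missing.
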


This implies that these coefficients are in $\{0,1,2\}$ \cite[Corollary 4]{Rosas}. We are ready to prove the following result, establishing a formula for double hook characters in $R_n^{(0,2)}$.

\begin{theorem}\label{thm:double-hook}
    Let $\lambda \vdash n$ be a double hook, that is, $\lambda = (\lambda_1,\lambda_2, 2^\ell,1^{n-2\ell-\lambda_1-\lambda_2})$ where $\lambda_1 \geq \lambda_2 \geq 2$. Let $m := n-2\ell-\lambda_1-\lambda_2 +2$.
    Then if $\lambda_1 = \lambda_2$, we have that
    \begin{equation}\label{eq:d=0}
        \langle \Frob(R_n^{(0,2)}; u,v), s_{\lambda}\rangle = (uv)^{\ell+\lambda_2 - 1}(uv[m-2]_{u,v}+ [m]_{u,v} + [m-1]_{u,v}).
    \end{equation}
    If $\lambda_1 > \lambda_2$, we have that
    \begin{equation}\label{eq:d_positive}
        \langle \Frob(R_n^{(0,2)}; u,v), s_{\lambda}\rangle = (uv)^{\ell+\lambda_2 - 1}(uv[m-1]_{u,v} + uv[m-2]_{u,v} + [m]_{u,v} + [m-1]_{u,v} ).
    \end{equation}
\end{theorem}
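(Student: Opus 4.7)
The plan is to combine Kim--Rhoades's formula (Theorem~\ref{thm:KR-frob}) with Rosas's combinatorial description of the Kronecker coefficients (Theorem~\ref{thm:rosas}(2)) and evaluate the resulting double sum.

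Taking the $s_\lambda$-coefficient in Theorem~\ref{thm:KR-frob} and abbreviating $g_\lambda(i,j) := g(\lambda,(n-i,1^i),(n-j,1^j))$ (with $g_\lambda = 0$ if either argument is $-1$) gives
\begin{align*}
\langle \Frob(R_n^{(0,2)};u,v), s_\lambda\rangle = \sum_{0 \leq i+j < n} u^i v^j \bigl(g_\lambda(i,j) - g_\lambda(i-1,j-1)\bigr).
\end{align*}
Reindexing the subtracted term by $(i,j)\mapsto(i+1,j+1)$ rewrites this as
\begin{align*}
(1-uv)\sum_{0 \leq i+j \leq n-3} u^i v^j g_\lambda(i,j) \;+\; \sum_{n-2 \leq i+j \leq n-1} u^i v^j g_\lambda(i,j).
\end{align*}

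By Theorem~\ref{thm:rosas}(2), I would write $g_\lambda(i,j) = A(i,j) + B(i,j)$ where $A$ and $B$ are the $\{0,1\}$-valued indicators from the two $\chi$-products. Setting $s := i+j$ and $d := m-2 = n-2\ell-\lambda_1-\lambda_2$, the support of $A$ is $\{(i,j): |i-j|\leq d,\; n+\lambda_2-\lambda_1-2 \leq s \leq n+\lambda_1-\lambda_2\}$, and $B$ has $|i-j| \leq d+1$ with $s$-range $[n+\lambda_2-\lambda_1-1,\, n+\lambda_1-\lambda_2-1]$. On each antidiagonal, the restricted sum $C(s) := \sum_{i+j=s,\,|i-j|\leq k,\,i,j\geq 0} u^i v^j$ (for $k \in \{d, d+1\}$) equals $(uv)^{(s-k)/2}[k+1]_{u,v}$ when $s \equiv k \pmod{2}$ and $(uv)^{(s-k+1)/2}[k]_{u,v}$ otherwise. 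Crucially, $C(s+2) = uv \cdot C(s)$ whenever both $s, s+2$ lie in the support, so the $(1-uv)$-factor telescopes the first sum to $C(s_0) + C(s_0+1) - C(n-2) - C(n-1)$, where $s_0$ is the lower endpoint of the support. Combined with the second sum, this leaves a total of $C_*(s_0^*) + C_*(s_0^* + 1)$ for each indicator $* \in \{A,B\}$.

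Evaluating these four antidiagonal contributions using the closed form above and substituting $s_0^A = n+\lambda_2-\lambda_1-2$, $s_0^B = n+\lambda_2-\lambda_1-1$ directly produces the four-term expression $(uv)^{\ell+\lambda_2-1}([m-1]_{u,v} + uv[m-2]_{u,v} + [m]_{u,v} + uv[m-1]_{u,v})$ in~\eqref{eq:d_positive}. The main obstacle is the edge case $\lambda_1 = \lambda_2$: then the supports of $A$ and $B$ lie entirely in the range $\{n-2,n-1\}$, so the $(1-uv)$-sum is empty and the telescoping argument does not apply; instead the whole contribution comes from $C_A(n-2) + C_A(n-1) + C_B(n-1)$, which assembles into the three-term formula~\eqref{eq:d=0} (one $uv[m-1]_{u,v}$ contribution is absent because $B$'s support collapses to the single antidiagonal $s = n-1$). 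In both cases the common prefactor $(uv)^{\ell+\lambda_2-1}$ emerges from the boundary value $(s-k)/2 = \lambda_2 + \ell - 1$, so verifying this algebra is the final routine step.
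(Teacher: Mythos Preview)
Your proposal is correct and follows essentially the same approach as the paper: both combine Theorem~\ref{thm:KR-frob} with Theorem~\ref{thm:rosas}(2), localize the contribution to the two lowest antidiagonals $s_0, s_0+1$ of the support, and evaluate the resulting sums to obtain the $(uv)^{\ell+\lambda_2-1}[\,\cdot\,]_{u,v}$ terms. The only cosmetic difference is that the paper reaches those two antidiagonals by computing the difference $\chi(\cdot\mid i,j)-\chi(\cdot\mid i-1,j-1)$ directly, whereas you reach them via the $(1-uv)$-reindexing and the telescoping identity $C(s+2)=uv\,C(s)$; the resulting antidiagonal sums and the case split at $\lambda_1=\lambda_2$ are identical.
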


\begin{proof}
    Recall from Theorem~\ref{thm:KR-frob} that 
    \begin{equation}\Frob(R_n^{(0,2)};u,v) = \sum_{0\leq i+j < n} u^iv^j\left(s_{(n-i,1^i)}*s_{(n-j,1^j)}-s_{(n-i+1,1^{i-1})}*s_{(n-j+1,1^{j-1})}\right).\end{equation}
    Thus
    \begin{equation}
    \begin{aligned}
        \langle &\Frob(R_n^{(0,2)};u,v), s_\lambda \rangle\\ 
        &= \sum_{0\leq i+j < n} u^iv^j\left(g(\lambda,(n-i,1^i),(n-j,1^j))-g(\lambda,(n-i+1,1^{i-1}),(n-j+1,1^{j-1}))\right).
    \end{aligned}
    \end{equation}
    Since $\lambda = (\lambda_1,\lambda_2, 2^\ell,1^{n-2\ell-\lambda_1-\lambda_2})$ is a double hook, by Theorem~\ref{thm:rosas},
    \begin{equation}\label{eq:large-equations}
    \begin{aligned}
    g(&\lambda,(n-i,1^i),(n-j,1^j)) - g(\lambda,(n-i+1,1^{i-1}),(n-j+1,1^{j-1}))\\
        &= \left(\chi(\lambda_2-1 \leq \tfrac{i+j-n+\lambda_1+\lambda_2}{2} \leq \lambda_1) -\chi(\lambda_2-1 \leq \tfrac{i+j-n+\lambda_1+\lambda_2-2}{2} \leq \lambda_1) \right)\\
        &\qquad \cdot \chi(|j-i| \leq n-2\ell-\lambda_1-\lambda_2)\\
        &\quad+\left(\chi(\lambda_2 \leq \tfrac{i+j-n+\lambda_1+\lambda_2+1}{2} \leq \lambda_1) - \chi(\lambda_2 \leq \tfrac{i+j-n+\lambda_1+\lambda_2-1}{2} \leq \lambda_1) \right)\\
        &\quad\qquad \cdot \chi(|j-i| \leq n-2\ell-\lambda_1-\lambda_2+1).
    \end{aligned}
    \end{equation}
    Define $d:= \lambda_1 - \lambda_2$. We analyze a component of equation~(\ref{eq:large-equations}):
    \begin{equation}\chi(\lambda_2-1 \leq \tfrac{i+j-n+\lambda_1+\lambda_2}{2} \leq \lambda_1) -\chi(\lambda_2-1 \leq \tfrac{i+j-n+\lambda_1+\lambda_2-2}{2} \leq \lambda_1),\end{equation}
    which is equivalent to 
    \begin{equation}\label{eq:first_chi_seq}
        \chi(|i+j - (n-1)| \leq d+1) - \chi(|i+j - (n+1)| \leq d+1).
    \end{equation}
    Since the bigraded component $(R_n^{(0,2)})_{i,j} = 0$ whenever $i+j \geq n$, we exclude these cases from our analysis. Thus $|i+j - (n-1)| \leq d+1$ is satisfied if $i+j = n-1$ or $n-2$ (if $d \geq 0$), if $i+j = n-3$ (if $d \geq 1$), if $i+j = n-4$ (if $d \geq 2$), etc. On the other hand, $|i+j - (n+1)| \leq d+1$ is satisfied if $i+j = n-1$ (if $d \geq 1$), if $i+j = n-2$ (if $d \geq 2$), etc. Putting these together, we get that equation~(\ref{eq:first_chi_seq}) is 1 when $i+j \in \{n-1-d,n-2-d\}$ and 0 otherwise.

    We analyze when condition $\chi(|j-i| \leq n-2\ell-\lambda_1-\lambda_2)$ is satisfied. At $i+j = n-1-d$, the condition is true exactly for integers $j$ which satisfy
    \begin{equation}
        \ell + \lambda_2 \leq j \leq n-d-\ell-\lambda_2-1.
    \end{equation}
    Summing up $u^{n-1-d-j}v^j$ over such $j$ gives 
    \begin{equation}\label{eq:first}
        (uv)^{\ell+\lambda_2}[n-d-2\ell-2\lambda_2]_{u,v}.
    \end{equation} 
    At $i+j = n-2-d$, the condition is true exactly for integers $j$ which satisfy
    \begin{equation}
        \ell + \lambda_2 -1 \leq j \leq n-d-\ell-\lambda_2-1.
    \end{equation}
    Summing up $u^{n-2-d-j}v^j$ over such $j$ gives 
    \begin{equation}\label{eq:second}
        (uv)^{\ell+\lambda_2-1}[n-d-2\ell-2\lambda_2+1]_{u,v}.
    \end{equation}

    Next we analyze another component of equation~(\ref{eq:large-equations}):
    \begin{equation}
        \chi(\lambda_2 \leq \tfrac{i+j-n+\lambda_1+\lambda_2+1}{2} \leq \lambda_1) - \chi(\lambda_2 \leq \tfrac{i+j-n+\lambda_1+\lambda_2-1}{2} \leq \lambda_1),
    \end{equation}
    which is equivalent to 
    \begin{equation}\label{eq:second_chi_seq}
        \chi(|i+j - (n-1)| \leq d) - \chi(|i+j - (n+1)| \leq d).
    \end{equation}
    We have that $|i+j - (n-1)| \leq d$ is satisfied if $i+j = n-1$ (if $d \geq 0$), if $i+j = n-2$ (if $d \geq 1$), if $i+j = n-3$ (if $d \geq 2$), etc. On the other hand, $|i+j - (n+1)| \leq d$ is satisfied if $i+j = n-1$ (if $d \geq 2$), if $i+j = n-2$ (if $d \geq 3$), etc. Putting these together, when $d=0$, we get that equation~(\ref{eq:second_chi_seq}) is 1 when $i+j = n-1$ and 0 otherwise. When $d \geq 1$, we get that equation~(\ref{eq:second_chi_seq}) is 1 when $i+j \in \{n-d,n-1-d\}$ and 0 otherwise.

    We analyze when condition $\chi(|j-i| \leq n-2\ell-\lambda_1-\lambda_2+1)$ is satisfied. At $i+j = n-1-d$, the condition is true exactly for integers $j$ which satisfy
    \begin{equation}
        \ell + \lambda_2 -1 \leq j \leq n-d-\ell-\lambda_2.
    \end{equation}
    Summing up $u^{n-1-d-j}v^j$ over such $j$ gives
    \begin{equation}\label{eq:third}
    (uv)^{\ell+\lambda_2-1}[n-d-2\ell-2\lambda_2+2]_{u,v}.
    \end{equation}
    When $d \geq 1$, at $i+j = n-d$, the condition is true exactly for integers $j$ which satisfy
    \begin{equation}
        \ell + \lambda_2 \leq j \leq n-d-\ell-\lambda_2.
    \end{equation}
    Summing up $u^{n-d-j}v^j$ over such $j$ gives 
    \begin{equation}\label{eq:fourth}
        (uv)^{\ell+\lambda_2}[n-d-2\ell-2\lambda_2+1]_{u,v}.
    \end{equation}

    For concision, we use $m := n-2\ell-\lambda_1-\lambda_2 +2 = n-d - 2\ell - 2\lambda_2+2$. When $d = 0$, i.e., $\lambda_1 = \lambda_2$, summing equations~(\ref{eq:first}), (\ref{eq:second}), and (\ref{eq:third}) proves equation~(\ref{eq:d=0}). When $d \geq 1$, i.e., $\lambda_1 > \lambda_2$, summing equations~(\ref{eq:first}), (\ref{eq:second}), (\ref{eq:third}), and (\ref{eq:fourth}) proves equation~(\ref{eq:d_positive}).
    
\end{proof}

\section{Four sets of fermions}\label{sec:four_fermions}

A natural further question is to determine $\langle \Frob(R_n^{(0,j)};u_1,\ldots,u_j), s_{(1^n)}\rangle$ for $j \geq 4$, of which the simplest next case is $\langle \Frob(R_n^{(0,4)};u,v,w,z), s_{(1^n)}\rangle$. 
If the hook characters in $R_n^{(0,3)}$ are all known, then a similar process as in Proposition~\ref{prop:(0,2)(0,1)bound} using $\Frob(R_n^{(0,3)} \otimes R_n^{(0,1)}; u,v,w,z)$ can be attempted. Using Theorem~\ref{thm:rosas}, we can first bound $\langle \Frob(R_n^{(0,3)}; u,v,w), s_{(n-d,1^d)}\rangle$.

\begin{example}
Let $n=3$. Suppose we want to calculate $\langle \Frob(R_3^{(0,3)}; u,v,w), s_{(2,1)}\rangle$. Consider that any $s_{(2,1)}$ in $\Frob(R_3^{(0,3)};u,v,w)$ must appear in $\Frob(R_3^{(0,2)}\otimes R_3^{(0,1)};u,v,w)$. We compute that
\begin{equation}
\begin{aligned}
    \langle \Frob&(R_3^{(0,2)}\otimes R_3^{(0,1)}; u,v,w), s_{(2,1)}\rangle\\
    &= \sum_{\lambda \vdash 3} \sum_{d=0}^2 \langle \Frob(R_3^{(0,2)};u,v), s_{\lambda} \rangle \langle \Frob(R_3^{(0,1)};w), s_{(3-d,1^d)} \rangle g(\lambda, (3-d,1^d), (2,1)).
\end{aligned}
\end{equation}
Using Rosas' formula for Kronecker coefficients of two hooks, we determine that the only pairs of $\lambda$ and $(3-d,1^d)$ which do not have multiplicity of $0$ are $(3), (2,1)$; $(2,1), (3)$; $(2,1), (2,1)$; $(2,1), (1^3)$; and $(1^3), (2,1)$, which all have multiplicity of $1$. Using Proposition~\ref{prop:KR-sign} and Lemma~\ref{lem:HSlemma}, we obtain
\begin{equation}
    w + ([2]_{u,v} +uv) + ([2]_{u,v} +uv)w + ([2]_{u,v} +uv)w^2 + [3]_{u,v}w.
\end{equation}
By permuting which sets of variables are assigned to $R_n^{(0,2)}$ and $R_n^{(0,1)}$, we can similarly obtain
\begin{equation}
    v + ([2]_{u,w} +uw) + ([2]_{u,w} +uw)v + ([2]_{u,w} +uw)v^2 + [3]_{u,w}v,
\end{equation}
\begin{equation}
    u + ([2]_{w,v} +wv) + ([2]_{w,v} +wv)u + ([2]_{w,v} +wv)u^2 + [3]_{w,v}u.
\end{equation}
Summing the monomials which appear in all three equations, we obtain
\begin{equation}
    u + v + w + uv + uw+ vw + 2uvw.
\end{equation}
Now a computer calculation finds that 
\begin{equation}
    \langle\Frob(R_3^{(0,3)};u,v,w), s_{(2,1)}\rangle = u + v + w + uv + uw+ vw,
\end{equation}
which shows that the bound is not tight in this case due to the monomial $2uvw$. It will require additional work to determine how to cut out the extra monomials in general.
\end{example}

\begin{example}
    Let $n=3$. Suppose we know all of the hook characters in $R_3^{(0,3)}$; in this case $\Frob(R_3^{(0,3)};u,v,w) = (uvw+u^2+uv+v^2+uw+vw+w^2)s_{(1^3)} + (uv+uw+vw+u+v+w)s_{(2, 1)} + s_{(3)}$. Consider that any sign character in $R_3^{(0,4)}$ must appear in $R_3^{(0,3)} \otimes R_3^{(0,1)}$. We compute that
    \begin{equation}
    \begin{aligned}
    \langle \Frob(R_3^{(0,3)}\otimes R_3^{(0,1)};u,v,w,z), s_{(1^3)}\rangle &= \sum_{\lambda \vdash 3} \langle \Frob(R_3^{(0,3)};u,v,w), s_{\lambda'} \rangle \langle \Frob(R_3^{(0,1)};z), s_{\lambda} \rangle\\
    &=\sum_{d=0}^2 \langle \Frob(R_3^{(0,3)};u,v,w), s_{(d+1,1^{2-d})} \rangle z^d,
\end{aligned}
\end{equation}
using Lemma~\ref{lem:HSlemma}. This simplifies to
\begin{equation}\label{eq:(0,4)sign_example}
    u^2 + v^2 + w^2 + z^2 + uv + uw +uz+vw+vz+wz+uvw+uvz+uwz+vwz,
\end{equation}
which is unchanged under changing which sets of variables are assigned to $R_n^{(0,3)}$ and $R_n^{(0,1)}$. In this example, equation~(\ref{eq:(0,4)sign_example}) is equal to
$\langle\Frob(R_3^{(0,4)};u,v,w,z), s_{(1^3)}\rangle$ determined by computer calculation.
\end{example}

\begin{example}
The previous example also gives tight bounds for $n=4$, with graded multiplicity $s_{(3)}(u,v,w,z)+s_{(2,1,1)}(u,v,w,z)+s_{(2,1,1,1)}(u,v,w,z)$, and for $n=5$, with graded multiplicity $s_{(4)}(u,v,w,z) + s_{(3,1,1)}(u,v,w,z)+s_{(3,1,1,1)}(u,v,w,z)+s_{(2,2,1,1)}(u,v,w,z)+s_{(2,2,2,1)}(u,v,w,z)$.
\end{example}

Another approach is to use $\Frob(R_n^{(0,2)} \otimes R_n^{(0,2)};u,v,w,z)$ to bound the sign character in $\Frob(R_n^{(0,4)};u,v,w,z)$ directly. As Kim and Rhoades note, only Schur functions of shapes contained within double hooks occur in $\Frob(R_n^{(0,2)};u,v)$, so the shapes which index sign characters are limited. 

\begin{example}
Let $n=3$. Consider that any sign character in $R_3^{(0,4)}$ must appear in $R_3^{(0,2)}\otimes R_3^{(0,2)}$. We compute that
\begin{align}
    \langle \Frob(R_3^{(0,2)}\otimes R_3^{(0,2)};u,v,w,z), s_{(1^3)}\rangle &= \sum_{\lambda \vdash 3} \langle \Frob(R_3^{(0,2)};u,v), s_{\lambda'} \rangle \langle \Frob(R_3^{(0,2)};w,z), s_{\lambda} \rangle.
\end{align}
In this case, there are only three partitions of $3$: $(1^3), (2,1), (3)$. Using Proposition~\ref{prop:KR-sign}, we get 
\begin{align}
    \langle \Frob(R_3^{(0,2)};u,v), s_{(3)} \rangle \langle \Frob(R_3^{(0,2)};w,z), s_{(1^3)} \rangle = [3]_{w,z},
\end{align}
\begin{align}
    \langle \Frob(R_3^{(0,2)};u,v), s_{(2,1)} \rangle \langle \Frob(R_3^{(0,2)};w,z), s_{(2,1)} \rangle = ([2]_{u,v} + uv)([2]_{w,z} + wz),
\end{align}
\begin{align}
    \langle \Frob(R_3^{(0,2)};u,v), s_{(1^3)} \rangle \langle \Frob(R_3^{(0,2)};w,z), s_{(3)} \rangle = [3]_{u,v}.
\end{align}
Putting these together, we obtain
\begin{equation}
    [3]_{w,z} + ([2]_{u,v} + uv)([2]_{w,z} + wz) + [3]_{u,v}.
\end{equation}
By changing which sets of variables are assigned to each $R_n^{(0,2)}$, we can similarly obtain
\begin{equation}
    [3]_{v,z} + ([2]_{u,w} + uw)([2]_{v,z} + vz) + [3]_{u,w},
\end{equation}
\begin{equation}
    [3]_{u,z} + ([2]_{w,v} + wv)([2]_{u,z} + uz) + [3]_{w,v}.
\end{equation}
In this case, all three are equal to each other, and expand to give
\begin{equation}
    uvwz + uvw + uvz + uwz + vwz + u^2 + uv + v^2 + uw + vw + w^2 + uz + vz + wz + z^2.
\end{equation}
Now a computer calculation finds that 
\begin{equation}
\begin{aligned}
    \langle\Frob&(R_3^{(0,4)};u,v,w,z), s_{(1^3)}\rangle\\ 
    &= uvw + uvz + uwz + vwz + u^2 + uv + v^2 + uw + vw + w^2 + uz + vz + wz + z^2,
\end{aligned}
\end{equation}
which shows that the bound is not tight in this case due to the monomial $uvwz$. It will require additional work to determine how to cut out the extra monomials in general.
\end{example}

\section{One set of bosons and three sets of fermions}\label{sec:one-three}

In this section, we study the sign character in $R_n^{(1,3)}$, the $(1,3)$-bosonic-fermionic coinvariant ring. We first recall the ``Theta conjecture'' of D'Adderio, Iraci, and Vanden Wyngaerd, which expresses the multigraded Frobenius series of $R_n^{(2,2)}$ in terms of certain Theta operators and the nabla operator.

\begin{conjecture}[\!\!{\cite[Conjecture 8.2]{DadderioIraciVandenWyngaerd2021}}]\label{conj:thetaconj}
    For all $n \geq 1$,
\begin{equation} \Frob(R_n^{(2,2)}; q,t;u,v) = \sum_{ k + \ell < n} u^k v^\ell\Theta_{e_k}\Theta_{e_\ell}\nabla e_{n-k-\ell}.\end{equation}
\end{conjecture}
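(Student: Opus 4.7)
The Theta conjecture is a deep open problem in the theory of diagonal harmonics, strictly generalizing the Shuffle Theorem (Carlsson--Mellit) and the Delta conjecture (recently proved in several forms). A complete proof is likely to require substantial new machinery, so my proposal is correspondingly speculative and aims mainly at a reduction strategy rather than a direct attack.

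The plan is to build outward from known specializations. Setting $u = v = 0$ collapses the right-hand side to $\nabla e_n$ and recovers the Shuffle Theorem, which is in hand. Setting only $v = 0$ (or $u = 0$) reduces to $\sum_k u^k \Theta_{e_k} \nabla e_{n-k}$, which is essentially the extended Delta conjecture; my first step would be to lift the existing proofs of that conjecture to the bifermionic setting, tracking carefully how the two fermionic degrees interact with the $q,t$-bigrading inherited from the bosonic variables $\bm{x}_n, \bm{y}_n$. In parallel, I would verify a range of coarse specializations, such as $\langle \cdot, e_n \rangle$, $\langle \cdot, h_n \rangle$, $\langle \cdot, s_{(1^n)} \rangle$, $q = 1$, and $t = 1$, both as sanity checks and to help pin down any proposed intermediate identity on the module side.

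For the general case the strategy is two-pronged. On the combinatorial side, one would expand the right-hand side using the existing formulas of D'Adderio--Iraci--Vanden Wyngaerd for $\Theta_{e_k}\Theta_{e_\ell}\nabla e_{n-k-\ell}$ in terms of decorated labeled Dyck paths, reading off a candidate Frobenius character from the statistics area and dinv together with two decoration statistics. On the module side, one would attempt to construct an explicit basis of $R_n^{(2,2)}$ whose elements are indexed by those decorated parking functions, with multigrading matching the statistics; this would plausibly proceed via an analogue of Haiman's polygraph construction, or via harmonic-seed arguments generalizing those used earlier in this paper to handle $R_n^{(0,3)}$.

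The main obstacle will be the Frobenius lift rather than the Hilbert series identity. The Theta operators do not yet carry a transparent representation-theoretic meaning, and producing an $\mathfrak{S}_n$-equivariant match between a combinatorial model and a module-theoretic basis is precisely the difficulty that kept the Shuffle and Delta conjectures open for decades. Short of such a construction, one might hope to characterize both sides by enough scalar-product and specialization tests to force equality; this would confirm the identity of symmetric functions, but would not reveal the underlying $\GL_2 \times \GL_2 \times \mathfrak{S}_n$-module structure.
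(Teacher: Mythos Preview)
The paper does not prove this statement at all: it is explicitly labeled a \emph{Conjecture} (the ``Theta conjecture'' of D'Adderio--Iraci--Vanden Wyngaerd), is cited as such from \cite{DadderioIraciVandenWyngaerd2021}, and is used only as a hypothesis (via its $t=0$ specialization, Conjecture~\ref{conj:thetaconj_t=0}) in conditional results like Theorem~\ref{thm:lentfer} and Proposition~\ref{prop:(1,3)-upper-bound}. There is therefore no proof in the paper to compare your proposal against.

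You correctly recognize that this is a deep open problem and that your write-up is a strategy sketch rather than a proof. That is the right framing, but it also means the proposal does not constitute a proof in any sense: every step you outline (lifting Delta-conjecture proofs to the bifermionic setting, building a polygraph-style module basis indexed by decorated parking functions, or characterizing both sides via enough specializations) is itself an open-ended research program. In particular, the ``main obstacle'' you identify --- producing an $\mathfrak{S}_n$-equivariant match between the combinatorial model and a module basis --- is not a gap in your argument so much as the entire content of the conjecture. For the purposes of this paper, the statement should simply be recorded as a conjecture, as the authors do.
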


The Theta conjecture specialized at $t=0$ is the following.

\begin{conjecture}[D'Adderio, Iraci, and Vanden Wyngaerd]\label{conj:thetaconj_t=0}
    For all $n \geq 1$,
\begin{equation} \Frob(R_n^{(1,2)}; q;u,v) = \sum_{ k + \ell < n} u^k v^\ell\left(\Theta_{e_k}\Theta_{e_\ell}\nabla e_{n-k-\ell}\right)|_{t=0}.\end{equation}
\end{conjecture}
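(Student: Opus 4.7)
The plan is to match the bigraded Frobenius characteristics on both sides by developing, in parallel, a combinatorial model for the right-hand side and a representation-theoretic handle on the left-hand side, piece by piece in the fermionic bidegree $(k,\ell)$. On the symmetric function side, the $t=0$ specialization is decisive: $\nabla e_m|_{t=0}$ reduces to a one-statistic generating function over Dyck paths via the shuffle theorem, and the Theta operators $\Theta_{e_k}$ acquire a tractable combinatorial action on parking-function-style objects, following D'Adderio--Iraci--Vanden Wyngaerd. The first step is to rewrite $\sum_{k+\ell<n}u^k v^\ell (\Theta_{e_k}\Theta_{e_\ell}\nabla e_{n-k-\ell})|_{t=0}$ as an explicit Schur expansion, so that each fermionic bidegree $(k,\ell)$ has a specific $\mathfrak{S}_n$-character to target.

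On the representation-theoretic side, I would exploit the fact that $R_n^{(1,2)}$ is a quotient of $R_n^{(1,0)} \otimes R_n^{(0,2)}$ under the diagonal $\mathfrak{S}_n$-action. Combining the classical coinvariant Frobenius with the Kim--Rhoades formula (Theorem~\ref{thm:KR-frob}) via the Kronecker product yields an upper bound on $\Frob(R_n^{(1,2)}; q; u, v)$, by the same mechanism as in Proposition~\ref{prop:(0,2)(0,1)bound} and Corollary~\ref{cor:upper-bound}. Equality would then be forced by constructing enough explicit harmonics to saturate this bound in each fermionic bidegree: I would build highest weight vectors for the $\GL_1 \times \GL_2$-action by combining antisymmetrizations of $\bm{\theta}, \bm{\xi}$ monomials (mirroring the Kim--Rhoades basis) with appropriate Garsia--Stanton descent polynomials in $\bm{x}_n$, and verify they are harmonic using the commutation argument of Lemma~\ref{lem:commuting-opertors}.

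The main obstacle is the absence of an intrinsic representation-theoretic interpretation of $\Theta_{e_k}\Theta_{e_\ell}$; matching the conjectured formula at every bidegree amounts to an independent combinatorial identity that does not obviously follow from existing shuffle-type theorems. A promising inductive route is to peel off a single fermionic variable, reducing to $\Theta_{e_{k-1}}$ or $\Theta_{e_{\ell-1}}$ and invoking compositional shuffle identities in the style of Carlsson--Mellit, with base case $k=\ell=0$ given by the shuffle theorem at $t=0$. Even with such a scheme, closing the induction is delicate: the fermionic generators of the defining ideal of $R_n^{(1,2)}$ entangle the $x$-degree with the fermionic degrees in ways that are not transparently reflected in the Theta-operator recursion, so bookkeeping the error terms between the two sides at each inductive step is likely to be the technical heart of any proof.
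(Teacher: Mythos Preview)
The statement you are attempting to prove is labeled a \emph{Conjecture} in the paper (Conjecture~\ref{conj:thetaconj_t=0}), and the paper offers no proof of it. It is recorded as the $t=0$ specialization of the Theta conjecture of D'Adderio--Iraci--Vanden Wyngaerd and is used only as a hypothesis elsewhere (e.g., in Theorem~\ref{thm:lentfer} and Proposition~\ref{prop:(1,3)-upper-bound}). There is therefore no ``paper's own proof'' to compare your proposal against.

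As for the proposal itself, it is not a proof but a research outline, and you are candid about this: you note that matching the two sides at each bidegree ``amounts to an independent combinatorial identity that does not obviously follow from existing shuffle-type theorems,'' and that ``closing the induction is delicate.'' Two concrete gaps stand out. First, the upper-bound mechanism you invoke---comparing $R_n^{(1,2)}$ to $R_n^{(1,0)}\otimes R_n^{(0,2)}$ via Kronecker products---only gives an inequality; in the paper's $(0,3)$ setting the slack was killed by a symmetry argument (permuting which variable sets go to which factor, as in Corollary~\ref{cor:upper-bound}), but here the bosonic and fermionic factors are not interchangeable, so there is no evident way to squeeze the bound down to the conjectured value. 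Second, your plan to saturate the bound by constructing explicit harmonics combining Garsia--Stanton polynomials with Kim--Rhoades-type fermionic elements is plausible in spirit but entirely unexecuted: you would need to exhibit, for every irreducible $\mathfrak{S}_n$-character and every $(q;u,v)$-degree, enough independent harmonics, and no such construction is known. What you have written is a reasonable sketch of where the difficulties lie, not a proof.
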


We recall the following result on the conjectural hook characters in $R_n^{(1,2)}$.

\begin{theorem}[\!\!{\cite[Theorem 8.4]{Lentfer2024}}]\label{thm:lentfer}
    If the Theta conjecture specialized at $t=0$ (Conjecture~\ref{conj:thetaconj_t=0}) is true, then
    \begin{equation} 
    \begin{aligned}
        \langle \Frob&( R_n^{(1,2)};q;u,v), s_{(d+1,1^{n-d-1})}\rangle\\ &= \sum_{k+\ell < n} u^k v^\ell q^{\binom{n-d-k-\ell}{2}}\qbinom{n-1-d}{\ell}_q\qbinom{n-1-k}{d}_q\qbinom{n-1-\ell}{k}_q.
    \end{aligned}
    \end{equation}
\end{theorem}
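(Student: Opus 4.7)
The plan is to start from Conjecture~\ref{conj:thetaconj_t=0} and, by linearity of the Schur inner product, reduce the theorem to proving for each fixed pair $(k,\ell)$ with $k+\ell<n$ the single-term hook identity
\[
\bigl\langle\,\bigl(\Theta_{e_k}\Theta_{e_\ell}\nabla e_{n-k-\ell}\bigr)\big|_{t=0},\; s_{(d+1,1^{n-d-1})}\bigr\rangle
= q^{\binom{n-d-k-\ell}{2}}\qbinom{n-1-d}{\ell}_q\qbinom{n-1-k}{d}_q\qbinom{n-1-\ell}{k}_q.
\]
Multiplying by $u^k v^\ell$ and summing over all admissible $(k,\ell)$ then yields the theorem.

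To establish this key identity my approach is to work in the modified Macdonald basis $\{\widetilde H_\mu\}$. Since $\nabla$ is diagonalized by $\widetilde H_\mu$ with eigenvalue $T_\mu = \prod_{c\in\mu} q^{a'(c)} t^{l'(c)}$, and each $\Theta_{e_j}$ acts through a well-understood plethystic substitution that can be evaluated on this basis, one can expand $e_{n-k-\ell}$ in $\widetilde H_\mu$'s and apply the composition of operators to obtain an explicit sum over $\mu \vdash n$ (after accounting for the degree shifts introduced by the two $\Theta_{e_j}$ prefixes). The hook coefficient $\langle \widetilde H_\mu, s_{(d+1,1^{n-d-1})}\rangle$ has a classical closed form as a product of $(q,t)$-arm/leg factors, so the desired hook coefficient on the left is an explicit $(q,t)$-rational sum over $\mu \vdash n$.

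The decisive simplification comes from the specialization $t=0$: both the Macdonald coefficients of $e_{n-k-\ell}$ and the hook inner products involve positive powers of $t$ whenever $\mu$ has leg lengths exceeding certain thresholds, so only $\mu$ of very restricted shape (essentially hooks, appropriately shifted by the $\Theta$-prefixes) survive. What remains is a polynomial identity in $q$ which I would match to the claimed product of three $q$-binomials by iterated $q$-Vandermonde summation, naturally partitioning the three independent indices $k$, $\ell$, $d$ among the three factors. As a combinatorial alternative, one could start from the shuffle-theorem-style formula for $\Theta_{e_k}\Theta_{e_\ell}\nabla e_{n-k-\ell}$ as a weighted sum over decorated labeled Dyck paths, observe that at $t=0$ only paths of $\mathrm{area}=0$ survive, and count these paths whose co-inversion statistic pairs with a fixed hook content.

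The main obstacle will be the plethystic bookkeeping in the middle step: tracking how the two $\Theta_{e_j}$ prefixes interact with $\nabla$ and with the Macdonald eigenvalues at $t=0$ is delicate, and one must verify that the $q^{\binom{n-d-k-\ell}{2}}$ prefactor — reminiscent of an area-type statistic from the Delta/shuffle theorems — emerges with exactly the right exponent after all the substitutions collapse. Once the sum has been reduced to a single product of three $q$-binomial coefficients, the final matching is a routine manipulation.
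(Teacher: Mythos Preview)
The paper does not contain a proof of this theorem: it is quoted verbatim as \cite[Theorem 8.4]{Lentfer2024} and then \emph{applied} in Proposition~\ref{prop:(1,3)-upper-bound}, but no argument for it is given here. Consequently there is no ``paper's own proof'' to compare your proposal against.

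As for your outline itself, the reduction by linearity to the single-term hook identity
\[
\bigl\langle\,(\Theta_{e_k}\Theta_{e_\ell}\nabla e_{n-k-\ell})\big|_{t=0},\; s_{(d+1,1^{n-d-1})}\bigr\rangle
= q^{\binom{n-d-k-\ell}{2}}\qbinom{n-1-d}{\ell}_q\qbinom{n-1-k}{d}_q\qbinom{n-1-\ell}{k}_q
\]
is clearly the right first step, and both strategies you sketch (Macdonald-basis expansion with $t=0$ collapse, or a decorated-Dyck-path interpretation with $\mathrm{area}=0$ surviving) are standard avenues in this area. However, what you have written is a plan, not a proof: you do not actually expand $\Theta_{e_k}\Theta_{e_\ell}\nabla e_{n-k-\ell}$ in the $\widetilde H_\mu$ basis, do not identify precisely which $\mu$ survive at $t=0$, and do not carry out the $q$-Vandermonde manipulations that would produce the three $q$-binomials and the $q^{\binom{n-d-k-\ell}{2}}$ prefactor. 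You yourself flag the ``plethystic bookkeeping'' as the main obstacle and then leave it unresolved. To turn this into a proof you would need to either execute those computations in full or consult the cited reference for the actual argument.
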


Now we compute an example.

\begin{example}
    Let $n=3$. Consider that any sign character in $R_3^{(1,3)}$ must appear in $R_3^{(1,2)}\otimes R_3^{(0,1)}$. We compute that
    \begin{equation}
\begin{aligned}
    \langle \Frob&(R_3^{(1,2)}\otimes R_3^{(0,1)};q;u,v,w), s_{(1^3)}\rangle\\ 
    &= \sum_{\lambda \vdash 3} \langle \Frob(R_3^{(1,2)};q;u,v), s_{\lambda'} \rangle \langle \Frob(R_3^{(0,1)};w), s_{\lambda} \rangle\\
    &=\sum_{d=0}^2 \langle \Frob(R_3^{(1,2)};q;u,v), s_{(d+1,1^{2-d})} \rangle w^d\\
    &= (q^3 + vq[2]_q + uq[2]_q + uv[2]_q + v^2 + u^2) + (uv + q[2]_q + v[2]_q + u[2]_q)w + w^2.
\end{aligned}
\end{equation}
In this example, this is equal to
$\langle\Frob(R_3^{(1,3)};q;u,v,w), s_{(1^3)}\rangle$ determined by computer calculation.
\end{example}

\begin{proposition}\label{prop:(1,3)-upper-bound}
If the Theta conjecture specialized at $t=0$ (Conjecture~\ref{conj:thetaconj_t=0}) is true, then we have the following upper-bound:
\begin{equation} 
\begin{aligned}
\langle \Frob&( R_n^{(1,3)};q;u,v,w), s_{(1^n)}\rangle\\ &\leq \sum_{k,\ell,d \geq 0} u^k v^\ell w^d q^{\binom{n-d-k-\ell}{2}}\qbinom{n-1-d}{\ell}_q\qbinom{n-1-k}{d}_q\qbinom{n-1-\ell}{k}_q.
\end{aligned}
\end{equation}
\end{proposition}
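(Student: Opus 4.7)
The plan is to mimic the structure of the proof of Proposition~\ref{prop:(0,2)(0,1)bound} (and the preceding $n=3$ example), with the two factors of the tensor product being $R_n^{(1,2)}$ and $R_n^{(0,1)}$ instead of $R_n^{(0,2)}$ and $R_n^{(0,1)}$. First I would observe, exactly as in the proof of Corollary~\ref{cor:upper-bound}, that $R_n^{(1,3)}$ is a quotient of $R_n^{(1,2)} \otimes R_n^{(0,1)}$ under the diagonal $\mathfrak{S}_n$-action, so any occurrence of the sign character in $R_n^{(1,3)}$ is also an occurrence in the tensor product. Hence
\[\langle \Frob(R_n^{(1,3)};q;u,v,w), s_{(1^n)}\rangle \leq \langle \Frob(R_n^{(1,2)} \otimes R_n^{(0,1)};q;u,v,w), s_{(1^n)}\rangle.\]

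Next I would apply Bessenrodt's formula for the Frobenius series of a tensor product of $\mathfrak{S}_n$-modules (as quoted in the proof of Proposition~\ref{prop:(0,2)(0,1)bound}), together with the identity $g(\lambda,\mu,1^n) = \delta_{\mu,\lambda'}$, to rewrite the right-hand side as
\[\sum_{\lambda \vdash n} \langle \Frob(R_n^{(1,2)};q;u,v), s_{\lambda'}\rangle \, \langle \Frob(R_n^{(0,1)};w), s_{\lambda}\rangle.\]
By Lemma~\ref{lem:HSlemma}, only hook Schur functions $s_{(n-d,1^d)}$ appear in $\Frob(R_n^{(0,1)};w)$, each with coefficient $w^d$. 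Since the conjugate of a hook is a hook, this collapses the sum to
\[\sum_{d=0}^{n-1} w^d \, \langle \Frob(R_n^{(1,2)};q;u,v), s_{(d+1,1^{n-d-1})}\rangle.\]

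Finally, under the hypothesis of Conjecture~\ref{conj:thetaconj_t=0}, I would substitute the explicit formula from Theorem~\ref{thm:lentfer} for each hook character of $R_n^{(1,2)}$, yielding
\[\sum_{d=0}^{n-1} \sum_{k+\ell < n} u^k v^\ell w^d \, q^{\binom{n-d-k-\ell}{2}} \qbinom{n-1-d}{\ell}_q \qbinom{n-1-k}{d}_q \qbinom{n-1-\ell}{k}_q,\]
which is precisely the claimed bound (the summation ranges in the statement extend to all $k,\ell,d \geq 0$, but the $q$-binomial coefficients vanish outside the effective range, so the two expressions agree term by term).

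There is essentially no hard step here: the quotient observation supplies the inequality, Bessenrodt's formula and $g(\lambda,\mu,1^n) = \delta_{\mu,\lambda'}$ reduce the sign character to a dual pairing, Lemma~\ref{lem:HSlemma} restricts $\lambda$ to hooks, and Theorem~\ref{thm:lentfer} supplies the closed form. The only mild bookkeeping is verifying that conjugation sends $(n-d,1^d)$ to $(d+1,1^{n-d-1})$ so that Theorem~\ref{thm:lentfer} applies with the intended hook parameter, and that the triple sum indexing matches the stated formula once zero $q$-binomials are allowed.
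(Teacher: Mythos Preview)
Your proposal is correct and follows essentially the same argument as the paper: use that $R_n^{(1,3)}$ is a quotient of $R_n^{(1,2)}\otimes R_n^{(0,1)}$ to get the inequality, reduce the sign character of the tensor product to a sum over hooks via $g(\lambda,\mu,1^n)=\delta_{\mu,\lambda'}$ and Lemma~\ref{lem:HSlemma}, substitute Theorem~\ref{thm:lentfer}, and then extend the summation range using vanishing of the $q$-binomials. The paper's proof is line-for-line the same, including the final remark about why the indices may be taken over all $k,\ell,d\geq 0$.
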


\begin{proof}
    Any sign character in $R_n^{(1,3)}$ must appear in $R_n^{(1,2)}\otimes R_n^{(0,1)}$.  Assume the Theta conjecture specialized at $t=0$ (Conjecture~\ref{conj:thetaconj_t=0}) is true. By Theorem~\ref{thm:lentfer}, we compute that
\begin{equation}
\begin{aligned}
    \langle \Frob&(R_n^{(1,2)}\otimes R_n^{(0,1)};q;u,v,w), s_{(1^n)}\rangle\\
    &= \sum_{\lambda \vdash n} \langle \Frob(R_n^{(1,2)};q;u,v), s_{\lambda'} \rangle \langle \Frob(R_n^{(0,1)};w), s_{\lambda} \rangle\\
    &= \sum_{d=0}^{n-1} w^d \langle \Frob(R_n^{(1,2)};q;u,v), s_{(d+1,1^{n-d-1})}\rangle\\
    &= \sum_{d=0}^{n-1} w^d \sum_{k+\ell < n} u^k v^\ell q^{\binom{n-d-k-\ell}{2}}\qbinom{n-1-d}{\ell}_q\qbinom{n-1-k}{d}_q\qbinom{n-1-\ell}{k}_q\\
    &= \sum_{k,\ell,d \geq 0} u^k v^\ell w^d q^{\binom{n-d-k-\ell}{2}}\qbinom{n-1-d}{\ell}_q\qbinom{n-1-k}{d}_q\qbinom{n-1-\ell}{k}_q,
\end{aligned}
\end{equation}
where the last line follows by considering when the $q$-binomial coefficients must be $0$ (specifically, one could take $k+\ell < n$, $\ell+d < n$, and $d +k < n$).
\end{proof}

Based on data for $n \leq 5$, we propose the following conjecture.

\begin{conjecture}\label{conj:one-three}
    \begin{equation} 
    \begin{aligned}
    \langle \Frob&( R^{(1,3)};q;u,v,w), s_{(1^n)}\rangle\\ &= \sum_{k,\ell,d \geq 0} u^k v^\ell w^d q^{\binom{n-d-k-\ell}{2}}\qbinom{n-1-d}{\ell}_q\qbinom{n-1-k}{d}_q\qbinom{n-1-\ell}{k}_q.
    \end{aligned}
    \end{equation}
\end{conjecture}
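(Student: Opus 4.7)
The plan is to follow the same two-step template that proved Theorem~\ref{thm:main-theorem}: match the conjectured formula from above and from below. Assuming Conjecture~\ref{conj:thetaconj_t=0}, Proposition~\ref{prop:(1,3)-upper-bound} already supplies the upper bound, so the entire content of a proof reduces to constructing enough linearly independent antisymmetric harmonic polynomials in the $(1,3)$-harmonic space associated to $R_n^{(1,3)}$ to saturate this bound in every multidegree $(q;u,v,w)$.

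The natural route is to build $\GL_3$ highest weight vectors that generalize $\Delta_1(\bm{\theta}_n)$ and $\Delta_2(\bm{\theta}_n)$ by incorporating bosonic factors. For each triple $(k,\ell,d)$ contributing to the conjectured formula, I would seek $q$-families of antisymmetrized polynomials of the shape
\[ \sum_{\sigma \in \mathfrak{S}_n} \sgn(\sigma)\, \sigma\bigl(m(\bm{x}_n)\cdot \pi_{k,\ell,d}(\bm{\theta}_n,\bm{\xi}_n,\bm{\rho}_n)\bigr), \]
where $\pi_{k,\ell,d}$ is a fermionic highest weight product of trigraded degree $(d,\ell,k)$ generalizing the primary and secondary theta-seeds, and $m(\bm{x}_n)$ ranges over a combinatorial family of bosonic monomials whose joint $q$-enumeration reproduces $q^{\binom{n-d-k-\ell}{2}}\qbinom{n-1-d}{\ell}_q\qbinom{n-1-k}{d}_q\qbinom{n-1-\ell}{k}_q$. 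Harmonicity would reduce, via the natural extension of Lemma~\ref{lem:commuting-opertors} to the polarized mixed derivations $\sum_i \partial_{x_i}^a\partial_{\theta_i}^h\partial_{\xi_i}^k\partial_{\rho_i}^\ell$, to an analogue of Lemma~\ref{lem:unique-indices} after antisymmetrization; nonvanishing would be detected by isolating a surviving distinguished monomial as in Proposition~\ref{prop:nonzero}; and the highest weight property would be checked by verifying that $E^{\theta\leftarrow\xi}$ and $E^{\xi\leftarrow\rho}$ annihilate each candidate.

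The main obstacle is twofold. First, unlike in Theorem~\ref{thm:main-theorem}, where only two $\GL_3$-representations appeared and each admitted a one-line highest weight vector, here the multiplicities are nontrivial $q$-polynomials, so one must identify a combinatorial model whose $q$-statistic reproduces the triple $q$-binomial product together with the staircase factor $q^{\binom{n-d-k-\ell}{2}}$; plausible candidates include variants of parking functions, three-line fillings of Dyck paths, or diagonal-harmonic-style tableaux compatible with the $\sl_3$-action. Second, the bosonic factor $m(\bm{x}_n)$ cannot be chosen independently of $\pi_{k,\ell,d}$, because the mixed derivations couple the bosonic and fermionic sides; orchestrating the antisymmetrization so that it kills all incorrect derivatives simultaneously is the delicate point. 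A less constructive alternative would be to prove directly that the canonical surjection $R_n^{(1,2)}\otimes R_n^{(0,1)} \twoheadrightarrow R_n^{(1,3)}$ is injective on the sign isotypic component, which would promote Proposition~\ref{prop:(1,3)-upper-bound} to equality; this would require a fine analysis of the kernel, generated by the additional diagonal $\mathfrak{S}_n$-invariants involving $\bm{\rho}_n$, and I expect it to be at least as hard as the explicit construction above.
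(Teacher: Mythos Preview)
The statement you are attempting to prove is labeled \emph{Conjecture}~\ref{conj:one-three} in the paper; there is no proof of it in the paper to compare against. The paper proposes it on the basis of computer data for $n\le 5$ and then, in the remark following Proposition~\ref{prop:fibonacci_identity}, observes a conditional reduction: if both Conjecture~\ref{conj:thetaconj_t=0} (the $t=0$ Theta conjecture) and Conjecture~\ref{conj:bergeron_fibonacci} (Bergeron's Fibonacci conjecture) hold, then the ungraded dimension $\tfrac12 F_{3n}$ already saturates the upper bound of Proposition~\ref{prop:(1,3)-upper-bound}, forcing equality in every multidegree. That is the entirety of the paper's evidence; nothing is proved unconditionally.

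Your proposal is therefore not a proof but a research outline, and you are candid about this. It differs from the paper's conditional argument in that you would replace the appeal to Bergeron's conjecture by an explicit construction of antisymmetric $(1,3)$-harmonics, mimicking the $\Delta_1,\Delta_2$ construction of Section~\ref{sec:construction}. This would be strictly stronger than the paper's route (it would remove one of the two hypotheses), but as you yourself note, the obstacles are substantial: the multiplicities are now $q$-polynomials rather than $1$, so you need whole $q$-families of highest weight vectors whose enumeration matches a triple $q$-binomial product times a staircase, and the mixed bosonic--fermionic derivations couple $m(\bm{x}_n)$ to $\pi_{k,\ell,d}$ in a way that the purely fermionic arguments of Lemma~\ref{lem:unique-indices} and Proposition~\ref{prop:harmonic} do not address. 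Neither your explicit-harmonic plan nor your alternative (showing injectivity of $R_n^{(1,2)}\otimes R_n^{(0,1)}\twoheadrightarrow R_n^{(1,3)}$ on the sign isotype) is carried out, so what you have written is a plausible strategy, not a proof.
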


Notice that upon specializing $q,u,v,w$ all to $1$, the conjecture becomes
\begin{equation}
    \langle \Frob(R^{(1,3)};1;1,1,1), s_{(1^n)}\rangle = \sum_{k,\ell,d \geq 0}\binom{n-1-d}{\ell}\binom{n-1-k}{d}\binom{n-1-\ell}{k}.
\end{equation}

Let $F_n$ be the $n$th Fibonacci number, defined by the initial conditions $F_0=0$, $F_1=1$, and recurrence $F_{n} = F_{n-1} + F_{n-2}$ for all $n \geq 2$. Inspired by a conjecture of Zabrocki on $\Frob(R_n^{(2,1)};q,t;u)$ \cite{Zabrocki2019}, Bergeron made a general conjecture on how the Frobenius series of bosonic-fermionic coinvariant rings could be derived from the Frobenius series of purely bosonic coinvariant rings \cite[Conjecture 1]{Bergeron2020}. While collecting computational evidence, Bergeron conjectured the following.

\begin{conjecture}[\!\!{\cite[Table 3]{Bergeron2020}}]\label{conj:bergeron_fibonacci}
    For all $n \geq 1$,
    \begin{equation} \langle \Frob(R_n^{(1,3)};1;1,1,1), s_{(1^n)}\rangle = \frac{1}{2}F_{3n}.\end{equation}
\end{conjecture}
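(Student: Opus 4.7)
The plan is to deduce Conjecture~\ref{conj:bergeron_fibonacci} from the stronger Conjecture~\ref{conj:one-three}: specializing the latter at $q=u=v=w=1$ reduces the statement to the purely combinatorial identity
\[
S(n) := \sum_{k,\ell,d \geq 0}\binom{n-1-d}{\ell}\binom{n-1-k}{d}\binom{n-1-\ell}{k} = \tfrac{1}{2}F_{3n},
\]
which no longer references $R_n^{(1,3)}$ and is where all the real content sits.

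To prove this identity I would compute the ordinary generating function $A(x) := \sum_{n \geq 1} S(n)\,x^{n-1}$ in closed form. Writing $\binom{n-1-\ell}{k} = [z^k](1+z)^{n-1-\ell}$ and using $(1+z)^m \bigl(\tfrac{2+z}{1+z}\bigr)^{m-d} = (1+z)^d(2+z)^{m-d}$ with $m = n-1$, the sum over $\ell$ collapses to $\sum_\ell \binom{m-d}{\ell}\binom{m-\ell}{k} = [z^k](1+z)^d(2+z)^{m-d}$. Summing this against $\binom{m-k}{d}$ and factoring $(2+z)^k$ out of $(2+z)^{m-d}$, the binomial theorem gives $(1+z+2+z)^{m-k} = (3+2z)^{m-k}$, so $S(n) = \sum_k [z^k](2+z)^k(3+2z)^{m-k}$. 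Swapping summation order in $A(x)$, summing the geometric series $\sum_{m \geq k} x^{m-k}(3+2z)^{m-k} = (1-3x-2xz)^{-1}$, expanding $[z^k]$ and the remaining sums via $\sum_k \binom{k}{s} x^k = x^s(1-x)^{-s-1}$ and one final geometric series, I expect to obtain
\[
A(x) = \frac{1}{(1-3x)(1-x) - 4x^2} = \frac{1}{1-4x-x^2}.
\]
On the other hand, the Lucas-number identity $F_{m+3} - F_{m-3} = L_3 F_m = 4F_m$ shows that $a_n := \tfrac{1}{2}F_{3n}$ satisfies $a_{n+1} = 4a_n + a_{n-1}$ with $a_1 = 1$ and $a_2 = 4$, so its generating function (shifted so $a_n$ lies in degree $n-1$) is also $(1-4x-x^2)^{-1}$, proving the identity.

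The main obstacle is of course Conjecture~\ref{conj:one-three} itself; combined with Proposition~\ref{prop:(1,3)-upper-bound}, it suffices to establish Conjecture~\ref{conj:thetaconj_t=0}, and then only the upper bound is conditional. An unconditional proof of Conjecture~\ref{conj:bergeron_fibonacci} would instead demand constructing enough antisymmetric elements in the sign-isotypic component of $R_n^{(1,3)}$ to realize the lower bound $\tfrac{1}{2}F_{3n}$, presumably by extending the ``seed'' construction of Section~\ref{sec:construction} to the bosonic-fermionic setting; the closed form $(1-4x-x^2)^{-1}$ of the generating function hints that the correct combinatorial model may be a family of Fibonacci-like tilings or lattice paths, which could in turn guide the choice of seeds and the $q,u,v,w$-refined Conjecture~\ref{conj:one-three}.
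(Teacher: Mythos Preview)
The statement in question is a \emph{conjecture}, and the paper does not prove it; it remains open. So there is no ``paper's own proof'' to compare against. What the paper does establish is the combinatorial identity
\[
\sum_{k,\ell,d \geq 0}\binom{n-1-d}{\ell}\binom{n-1-k}{d}\binom{n-1-\ell}{k} = \tfrac{1}{2}F_{3n}
\]
(Proposition~\ref{prop:fibonacci_identity}), and then observes in the subsequent Remark that, assuming Conjecture~\ref{conj:thetaconj_t=0} \emph{and} Conjecture~\ref{conj:bergeron_fibonacci}, the refined Conjecture~\ref{conj:one-three} would follow via the upper bound of Proposition~\ref{prop:(1,3)-upper-bound}. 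Your logical framing runs the implication the other way (Conjecture~\ref{conj:one-three} $\Rightarrow$ Conjecture~\ref{conj:bergeron_fibonacci}); both directions are valid, and you correctly flag that Conjecture~\ref{conj:one-three} is the real obstacle.

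On the one unconditional piece---the identity $S(n)=\tfrac12 F_{3n}$---your generating-function argument is correct and genuinely different from the paper's. The paper verifies the recurrence $a_{n+1}=4a_n+a_{n-1}$ directly on the triple sum by repeated applications of Pascal's identity and index shifts, a somewhat lengthy but entirely elementary manipulation. Your approach instead collapses the sums over $\ell$ and $d$ via coefficient extraction to reach $S(n)=\sum_k[z^k](2+z)^k(3+2z)^{n-1-k}$, then passes to the generating function and closes it as $1/\bigl((1-3x)(1-x)-4x^2\bigr)=1/(1-4x-x^2)$. This is slicker and more systematic, and the intermediate closed form $\sum_k[z^k](2+z)^k(3+2z)^{m-k}$ is a nice byproduct; the paper's bare-hands recurrence check, by contrast, avoids any formal-power-series machinery. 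Both routes give the same Fibonacci recurrence in the end, and your use of $F_{m+3}-F_{m-3}=L_3F_m=4F_m$ to identify the recurrence is equivalent to the paper citing the OEIS entry for the same fact.
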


Beginning at $n=1$, the sequence $\frac{1}{2}F_{3n}$ is $1, 4, 17, 72, 305, 1292, 5473, 23184, \ldots$ (see \cite[Sequence \href{https://oeis.org/A001076}{A001076}]{OEIS}). The two proposed enumerations are connected by the following result.

\begin{proposition}[\!\!\cite{Carlitz}; see \cite{BenjaminRouse} for a bijective proof]\label{prop:fibonacci_identity} For $n \geq 1$,
    \begin{equation}\sum_{k,\ell,d \geq 0}\binom{n-1-d}{\ell}\binom{n-1-k}{d}\binom{n-1-\ell}{k}= \frac{1}{2}F_{3n}.\end{equation}
\end{proposition}

\begin{remark}
    If the Theta conjecture specialized at $t=0$ (Conjecture~\ref{conj:thetaconj_t=0}) is true, and the conjecture of Bergeron (Conjecture~\ref{conj:bergeron_fibonacci}) that $\langle \Frob(R^{(1,3)};1;1,1,1), s_{(1^n)}\rangle = \frac{1}{2}F_{3n}$ is true, then by Proposition~\ref{prop:fibonacci_identity}, the sign character has multiplicity large enough to force equality in the upper bound given in Proposition~\ref{prop:(1,3)-upper-bound}. This would prove Conjecture~\ref{conj:one-three}. 
\end{remark}

\section*{Acknowledgements}

The author would like to thank Fran\c{c}ois Bergeron, Sylvie Corteel, Nicolle Gonz\'alez, and Mark Haiman for helpful conversations. 
Thank you to the referee for comments which helped improve the paper.
The author was partially supported by the National Science Foundation Graduate Research Fellowship DGE-2146752.

\bibliographystyle{amsplain}
\bibliography{biblio}

\end{document}